\renewcommand{\COMMENT}[2][0.15\linewidth]{%
  \leavevmode\hfill\makebox[#1][l]{//~#2}}
\algnewcommand\algorithmicto{\textbf{to}}
\algnewcommand\RETURN{\State \textbf{return} }
\DeclareMathOperator*{\argmax}{arg\,max}
\DeclareMathOperator*{\argmin}{arg\,min}
\newcommand{\e}{\varepsilon}
\newcommand{\dd}{\ensuremath{\mathrm d}}
\newcommand{\ds}{\ensuremath{\mathrm ds}}
\newcommand{\dt}{\ensuremath{\mathrm dt}}
\newcommand{\dx}{\ensuremath{\mathrm dx}}
\newcommand{\cC}{\ensuremath{\mathcal{C}}}
\newcommand{\cI}{\ensuremath{\mathcal{I}}}
\newcommand{\cJ}{\ensuremath{\mathcal{J}}}
\newcommand{\cL}{\ensuremath{\mathcal{L}}}
\newcommand{\cN}{\ensuremath{\mathcal{N}}}
\newcommand{\cS}{\ensuremath{\mathcal{S}}}
\newcommand{\cT}{\ensuremath{\mathcal{T}}}
\newcommand{\cV}{\ensuremath{\mathcal{V}}}
\newcommand{\bE}{\ensuremath{\mathbb{E}}}
\newcommand{\bN}{\ensuremath{\mathbb{N}}}
\newcommand{\bP}{\ensuremath{\mathbb{P}}}
\newcommand{\bR}{\ensuremath{\mathbb{R}}}
\newcommand{\bp}{\ensuremath{\bm{p}}}
\newcommand{\bu}{\ensuremath{\bm{u}}}
\renewcommand{\u}{\ensuremath{u}}
\newcommand{\bphi}{\ensuremath{{\bm{\varphi}}}}
\newcommand{\btheta}{\ensuremath{{\bm{\theta}}}}
\newcommand{\bvtheta}{\ensuremath{{\bm{\vartheta}}}}
\newcommand{\F}{\ensuremath{{\bm{F}}}}
\newcommand{\st}{\ensuremath{L^\infty([0,T], \Theta)}}
\renewcommand{\sp}{\ensuremath{L^\infty([0,T], \bR^n)}}
\newcommand{\wH}{\ensuremath{\widetilde H}}
\newcommand{\fwd}{\texttt{solve\_fwd}\xspace}
\newcommand{\bwd}{\texttt{solve\_bwd}\xspace}
\newcommand{\MSA}{\text{MSA}\xspace}
\newcommand{\AMSA}{\text{A-MSA}\xspace}
\newcommand{\EMSA}{\text{E-MSA}\xspace}
\newcommand{\loss}{\ensuremath{\text{Loss}}}
\newcommand{\kmax}{\ensuremath{k}_\textrm{max}}
\begin{document}

\title{
Depth-Adaptive Neural Networks \\ from the Optimal Control viewpoint
}

\author[1]{Joubine Aghili}
\author[1, 2]{Olga Mula\thanks{This research was supported by the Emergence Project ``Models and Measures'' of the Paris City Council, \texttt{\{aghili,mula\}@ceremade.dauphine.fr}}}
 
\affil[1]{CEREMADE, CNRS, UMR 7534, Université Paris-Dauphine, PSL University, 75016 Paris, France}
\affil[2]{Inria Paris, Commedia Team, 2 rue Simone Iff 75012 Paris}

%
\date{}
\maketitle

\begin{abstract}
In recent years, deep learning has been connected with optimal control as a way to define a notion of a continuous underlying learning problem. In this view, neural networks can be interpreted as a discretization of a parametric Ordinary Differential Equation which, in the limit, defines a continuous-depth neural network. The learning task then consists in finding the best ODE parameters for the problem under consideration, and their number increases with the accuracy of the time discretization. Although important steps have been taken to realize the advantages of such continuous formulations, most current learning techniques fix a discretization (i.e.~the number of layers is fixed). In this work, we propose an iterative adaptive algorithm where we progressively refine the time discretization (i.e.~we increase the number of layers). Provided that certain tolerances are met across the iterations, we prove that the strategy converges to the underlying continuous problem. One salient advantage of such a shallow-to-deep approach is that it helps to benefit in practice from the higher approximation properties of deep networks by mitigating over-parametrization issues. The performance of the approach is illustrated in several numerical examples.

\end{abstract}

\noindent
\textbf{Keywords:} {Deep Learning; Neural Networks; Continuous-Depth Neural Networks; Optimal Control}

\section{Introduction}
\subsection{Context}
Neural networks produce structured parametric families of functions that have been studied for at least 70 years (see \cite{Hebb1949,Rosenblatt1958}). It is however only in the last decade that their popularity has surged. Thanks to the increase of computing power, and the development of easy-to-use computing tools for optimization and automatic differentiation, statistical learning of neural networks has produced state-of-the-art performance in a large variety of machine learning problems, from computer vision \cite{KSH2012} (e.g.~self-driving cars, X-rays diagnosis,...) to natural language processing \cite{WSCLNMKCG2016} (e.g.~Google translate, DeepL Translator,...) and reinforcement learning (e.g.~superhuman performance at Go \cite{GO-game-2016}). Despite this great empirical success, neural networks are not entirely well-understood and there is a pressing need to provide:
\begin{itemize}
\item solid \emph{mathematical foundations} to understand their approximation power and why they may outperform other classes of functions,
\item \emph{algorithms} to find systematically an appropriate architecture for each problem and to train the network in order to deliver in practice the approximation capabilities predicted by the theory, and to guarantee robustness over generalization errors.
\end{itemize}
On the first point, significant advances have been made recently on the approximation power of neural networks (see, e.g., \cite{Yarotsky2017,DDFHP2019, GPEB2019, GKP2019} for a selection of rigorous results). In particular, a few recent works have given theoretical evidence on the advantages of using deep versus shallow architectures for the approximation of certain relevant families of functions (see, e.g., \cite{Telgarsky2015,DDFHP2019}). However, so far the obtained results do not seem to be informative on how to address the second point above, that is, how to build algorithms that discover automatically the right architecture for each problem, and that may allow to benefit from the theoretically high approximation properties of (deep) neural networks. As a result, their training remains a key open issue subject to very active research. The present work is a contribution in this direction, with special focus on understanding the underlying mechanisms of training efficiently deep architectures.

The most commonly applied training methods are based on the stochastic gradient descent (\cite{RM1951, Bottou2010}) and its variants (see, e.g., \cite{DHS2011, Zeiler2012}). It has the advantage of being easily implementable but it is difficult to tune properly in practice, especially in problems involving large data sets and using deep neural networks with many layers and coefficients which are very prone to over-parametrization issues. The difficulty in training  deep networks raises the question regarding the benefits of applying a \emph{shallow-to-deep adaptive strategy} in the network architecture (and a hence coarse-to-fine adaptive strategy in the number of coefficients) in order perform the training in a more robust and rapid manner. The underlying intuition is that shallow networks give poor approximation properties but converge faster than deeper networks which, on the contrary, have higher approximation power.

There is a large body of emerging works which explore numerically the potential of training deep neural networks with shallow-to-deep strategies (see, e.g., \cite{CGS2016, WWRC2016, LH2017, CGKMY2017, EMH2018}). In this work, we present an abstract setting and convergence results which are a first theoretical justification of this type of algorithmic approach. In order to understand the potential gain, it is  necessary to define a notion of the continuous underlying objects that are approximated. This is the reason why we have chosen to work from a perspective which connects the task of learning with neural networks with optimal control and dynamical ODE systems. In this view, there is a notion of a continuous-depth neural network which is described by a continuous time-dependent ODE system. Its discretization fixes its architecture. For instance, ResNet \cite{HZRS2016} can be regarded as an Explicit Euler scheme for the solution of certain dynamics. The weights involved in the ODE are seen as controls and correspond to the parameters of the network which have to be optimized. The process of learning these parameters can then be recast as an optimal control problem over the admissible controls where the cost function is the empirical risk (sometimes with an extra regularization term). Necessary optimality conditions are then easily formulated through the Pontryagin Maximum Principle (PMP, \cite{BGP1960, Pontryagin2018}).

\subsection{Contribution of the present work}
The optimal control point of view is appealing since it gives access to a fully continuous description of neural networks and of the underlying statistical learning process. The practical implementation involves two discretization errors:
\begin{enumerate}
\item \textbf{Sampling error:} The cost function of the fully continuous optimal control problem is the average of a loss function, sometimes also called risk. However, in practice, the average loss is replaced by an empirical mean which is built from $N$ samples. So it is necessary to assess by how much the minimum the sampled mean deviates from the continuous underlying problem. We analyze this point in Theorem \ref{th:impact-sampling}.
\item \textbf{Time-integration error:} To solve the optimal control problem on the empirical mean of the loss, one solves the resulting PMP with Newton-type iterations involving the solution of forward and backward dynamical systems with certain time integration schemes (which fix the neural network architecture). The usual approach is to first fix a discretization and then solve the discrete version of the PMP. However, the discretization errors accumulate at each iteration and make us deviate from the time-continuous version. It is thus necessary to examine how much the final output deviates from the time-continuous one.
\end{enumerate}
In this context, the main contributions of the present work are:
\begin{enumerate}
\item \textbf{Derivation of a full error analysis:}
\begin{itemize}
\item We make a theoretical analysis on the interplay between the above two errors. The impact of the sampling error is analyzed in Theorem \ref{th:impact-sampling} and the discretization error in Theorem \ref{th:conv-AMSA}. From these results, it follows that to approximate the minimum of the underlying fully continuous problem at a certain accuracy, we not only need a minimal number of training samples, but we also need to solve the control problem on the empirical risk with some minimal accuracy. This requires, in turn, to use time-adaptive techniques. In our analysis, both errors are additive and can actually be optimally balanced. The final result is given in Corolary \ref{cor:final-conv-result} and it is formulated in probability.
\item To solve the control problem on the empirical risk at a given target accuracy, we show that we have to compute the forward and backward propagations at each step of the Newton-type algorithms at increasing accuracy. As our theoretical proof of Theorem \ref{th:conv-AMSA} will illustrate, tightening the accuracy of the time integration is necessary to guarantee convergence to the exact continuous problem and justifies the coarse-to-fine, resp.~shallow-to-deep, strategy. This yields neural networks with depths that are automatically adapted.
\end{itemize}
\item \textbf{Practical coarse-to-fine adaptive algorithm:}
\begin{itemize}
\item We next use the theory to derive actionable criteria to build time-integration schemes that are adaptively refined across the iterations. This has the advantage of reducing computational cost since early iterations use looser tolerances, thus avoiding unnecessary work due to oversolving, while later iterations use tighter tolerances to deliver accuracy. The coarse-to-fine strategy could also be understood as a dynamic regularization which helps to prevent over-parametrization.
\item We illustrate the behavior of the algorithm in several benchmark examples.
\end{itemize}
\end{enumerate}

\subsection{Potential impact, limitations and extensions}
We believe the results of this article can contribute to the following key topics in deep learning:
\begin{itemize}
\item Automate the selection of neural network architecture during the training,
\item Enhance the interpretability of the generalization errors by connecting them with sampling and discretization errors,
\item Provide a first theoretical justification of the works adopting the principle of shallow-to-deep training of deep neural networks.
\end{itemize}
Some limitations and natural questions that arise for future works are the following:
\begin{itemize}
\item Our theoretical results involve numerous bounds and some of them might be suboptimal. Certain constants involved are also difficult to estimate in practice. As a result, the theoretical convergence result cannot be used to derive fully explicit adaptive strategies for implementation. However it gives hints on how to proceed in practice, and we propose practical guidelines to perform adaptivity. In future works, it would be desirable to find a sharper analysis with constants that are easier to compute in order to obtain a fully closed pipeline. This may however require very different arguments as our current ones.
\item As we will explain in the next section, the optimal control point of view only describes certain classes of neural networks since it introduces certain restrictions in the architecture. As a result, our adaptive setting cannot automate the training of all existing classes of neural networks. So a natural question is how to extend the current ideas to other training techniques that are not based on optimal control.
\item One important question is whether the flow map of a given dynamical system can represent the data at hand. The same issue also arises in the classical approach of deep learning in terms of the optimality of the choice of the network. From the optimal control perspective, this representability problem can be viewed as a controllability problem. It would be interesting to transfer controllability results to the current deep learning framework to address this question.
\end{itemize}

\subsection{Outline of the paper}
In section \ref{sec:deep-learning}, we define precisely what we understand by deep learning and we recall its connection with optimal control. Section \ref{sec:optimal-control} gives the optimal control setting which we use in our subsequent developments. In particular, we formulate the fully continuous optimal control problem over the expectation of the loss and its sampled version involving an empirical mean of the loss. Section \ref{sec:PMP-EMSA} recalls the Pontryagin's Maximum Principle for the sampled problem. We also recall in this section the Extended Method of Successive Approximations (\EMSA), originally introduced in \cite{LCTE2018}, to solve the sampled problem. The algorithm is iterative, and it requires  to compute at each iteration a forward and a backward time propagation followed by a certain maximization over the control variables. In its original formulation, it is assumed that these steps are performed exactly. In Section \ref{sec:EMSA}, we formulate our main algorithm, which is an inexact version of the \EMSA that we call Adaptive MSA (\AMSA). It is based on approximately realizing each propagation and maximization within a certain accuracy. To ensure convergence to the exact solution, the accuracy needs to be tightened across the iterations. In section \ref{sec:conv-AMSA}, we prove that \AMSA converges to a time-continuous solution of the sampled PMP. In section \ref{sec:sampling-err}, we connect the solution of the sampled problem with the continuous one involving the exact average of the loss function. For this, we use results obtained in \cite{EHL2019}. This last step allows to connect how much the solution of \AMSA deviates from the fully continuous one in terms of the number of samples and the accuracy of the discretization. In section \ref{sec:impl}, we give guidelines to implement \AMSA in practice, and section \ref{sec:numerics} illustrates the performance of the algorithm in numerical examples. We conclude the paper in Section \ref{sec:conclusion}.



\section{From deep learning to optimal control}
\label{sec:deep-learning}
\subsection{Statistical Learning Problems}
\label{sec:stat-learning}
We consider the following \emph{statistical learning problem}: Assume that we are given a \emph{domain set} $X\subseteq \bR^n$ and a \emph{label set} $Y\subseteq \bR^k$, with $n,\, k\in \bN$. Further assume that there exists an unknown probability distribution $\mu$ on $X\times Y$. Given a loss function $\cL:Y\times Y \to \bR^+$, the goal of the statistical learning problem is to find a function $v:X\to Y$, which we will call \textit{prediction rule}, from a hypothesis class  $\cV \subset \{v:X \to Y\}$ such that the expected loss
$$
\cJ(v) \coloneqq \bE_{(x, y) \sim \mu} \cL(v(x), y)
$$
is minimized over $\cV$. In other words, the task is to find
$$
v^* \in \argmin_{v\in \cV} \cJ(v).
$$
In general, the probability distribution $\mu$ is unknown and we are only given a set $\cS_N$ of $N\in\bN$ training samples
$$
\cS_N \coloneqq \{(x_i, y_i)\}_{i=1}^N.
$$
The most common choice is then to consider the uniform distribution $\mu_N : X\times Y \to \bR^+$,
$$
\mu_N (x, y) = \frac 1 N \sum_{i=1}^N \delta_{(x_i, y_i)}(x,y)
$$
which yields the so-called \emph{empirical loss}
$$
\cJ_N(v) \coloneqq \bE_{(x, y) \sim \mu_N} \cL(v(x), y) = \frac 1 N \sum_{i=1}^N \cL(v(x_i), y_i)
$$
and one finds $v_N$, an approximation of $v$, by minimizing over  it,
\begin{equation}
\label{eq:learning-general}
v_N \in \argmin_{v\in \cV} \cJ_N(v).
\end{equation}
In the following, the above optimization step will be called the \emph{learning procedure}.

\subsection{Neural Network Architectures and Deep Learning}
While there exists numerous different architectures, we could simplify the discussion and say that neural networks are a class of functions with the basic general form
\begin{equation}
\label{eq:NNgeneral}
v:\bR^n \to \bR^k,\quad x \mapsto W_L\sigma( W_{L-1}\sigma(\dots( \sigma(W_1(x)) )  ),
\end{equation}
where:
\begin{itemize}
\item $L\in \bN$ is the \textit{depth}, i.e., the number of layers of the neural network,
\item $W_\ell:\bR^{N_{\ell-1}}\to\bR^{N_\ell}$ are affine maps for $\ell=1,\dots, L$. For consistency, we must set $N_0=n$ and $N_L = k$ but the rest of the dimensions $N_\ell\in \bN$ can be freely chosen. We have for all $x\in\bR^{N_{\ell-1}}$, $W_\ell(x)=A_\ell(x) + b_\ell$ for a matrix $A_\ell \in \bR^{N_\ell\times N_{\ell-1}}$ and a vector $b_\ell \in \bR^{N_\ell}$.
\item $\sigma: \bR \to \bR$ is a \emph{nonlinear activation function} which is applied coordinate-wise in \eqref{eq:NNgeneral}. Some popular choices are the ReLU function, $\sigma(x) = \max(0,x)$, or the hyperbolic tangent, $\sigma(x) = \tanh(x)$.
\end{itemize}
\emph{Deep learning} describes the range of learning procedures to solve statistical learning problems where the hypothesis class $\cV$ is taken to be a set of neural networks. One usually works with the class of neural networks with given depth $L$, activation function $\sigma$ and fixed dimensions $N_1, \dots, N_{L-1}$ for the affine mappings,
$$
\cN\cN(L, \sigma, N_1, \dots, N_{L-1}) := \{ v: X \to Y \,:\, v(x) = W_L\sigma( W_{L-1}\sigma(\dots( \sigma(W_1(x)) )  )  \}.
$$
For this class, the task is to solve the empirical risk problem \eqref{eq:learning-general} associated to it.

\subsection{Continuous Formulation as an Optimal Control Problem}
\label{sec:cont-form}
Functions of the type \eqref{eq:NNgeneral} can be built by repeated composition of  parametrizable functions
$$
\phi_\ell : \bR^{N_{\ell-1}}\to \bR^{N_\ell},\quad x\mapsto \phi_\ell(x) = \sigma(W_\ell(x)),\quad i =1,\dots, L-1,
$$
followed by an affine step, that is,
\begin{equation}
v = W_L \circ \phi_{L-1}\circ \dots \circ \phi_1, \quad\forall v \in \cN\cN(L, \sigma, N_1, \dots, N_{L-1}).
\end{equation}
This simple observation yields to numerous other possible architectures. The most relevant instance triggering the connection with optimal control are the so-called Residual Neural Networks (ResNet, \cite{HZRS2016}). They correspond to the choice
$$
\phi_\ell : \bR^{N_{\ell-1}}\to \bR^{N_\ell},\quad x\mapsto \phi_\ell(x) = x + h \sigma(W_\ell(x)),\quad i =1,\dots, L-1,
$$
for a certain parameter\footnote{In fact, ResNets were originally defined for $h=1$. The ``augmented'' definition with the paramater $h$ was introduced in \cite{HR2018}.} $h>0$. 

If we now set $N_{L-1}=\dots = N_1 = N_0 = n$, the application of $\phi_\ell$ can be interpreted as an Explicit Euler step from time $t_{\ell-1}= (\ell-1) h$  to time $t_{\ell} = t_{\ell-1} +h$ of the dynamics
$$
\dot x_t = f(x_t, \theta_t),\quad \text{ with } x_0 = x,
$$
where
$$
f(x_t, \theta_t) = \sigma( W_t (x_t)),\quad \text{with }W_t(x) = A_t x + b_t,
$$
and $\theta_t$ gathers the parameters upon which the dynamics depend. In our case,
$$
\theta_t=(A_t, b_t).
$$
Note that the matrices $A$ and $b$ are now time-dependent, hence the $t$-subscript.

In this view, ResNet functions can be interpreted as the output of performing $L-1$ time steps of size $h$ of the above dynamics, followed by an affine operation $W_L:\bR^n \to \bR^k$. This last operation is important since it maps the final output from the domain set in $\bR^n$ to the label set in $\bR^k$. In fact, it can be replaced by any linear or nonlinear mapping $g:\bR^n \to \bR^k$. Note also that $h$ can be included as a parameter of the model class and that one can optimize over it as in \cite{BCEOS2019}.  

Following similar lines, one can view certain classes of neural networks as discretizations of an underlying continuous dynamical system. Table \ref{tab:DNN-list} lists some popular classes and the associated numerical scheme (see, e.g., \cite{LZLD2018} for more details on these connections). We  emphasize however that the connection with optimal control comes at the price of imposing that the input and intra-layer dimensions are equal ($N_{L-1}=\dots = N_1 = N_0$), thus it cannot be made for all classes of neural networks.

\begin{table}
\begin{center}
  \begin{tabular}{ | l | l |}
    \hline
    Network Architecture & Associated ODE discretization \\
    \hline
    ResNet \cite{HZRS2016}, RevNet \cite{GRUG2017} & Forward Euler \\
    \hline
    Polynet \cite{ZLCL2017}  & Approximation of Backward Euler \\
    \hline
    FractalNet \cite{LMS2016}  & Runge-Kutta order 2\\
    \hline
  \end{tabular}
\end{center}
\caption{Some neural network classes and their associated ODE discretization.}
  \label{tab:DNN-list}
\end{table}

\subsection{Related works}
To the best of our knowledge, the connection between deep learning, dynamical systems and optimal control has been known since at least the 1980's, going back to the works of LeCun and Pineda in which the idea of back-propagation is connected to the adjoint variable arising in optimal control (see \cite{Lecun1988, Pineda1988}). The approach has  gained increasing attention in recent years. We refer to \cite{E2017, HR2018, LCTE2018, BCEOS2019, VKWN2020} for some selected references. The \EMSA deep learning algorithm based on the optimal control framework has been introduced in \cite{LCTE2018} and is the main starting point for our developments on an adaptivity. A recent development on the topic of adaptivity has been made in \cite{BCEOS2019} where the idea is to fix a time-scheme with a \emph{fixed} number of time steps, and include the time step as an additional parameter to be optimized. Note that this type of adaptivity lies in the time step sizes, and it is different than the one proposed here since we understand adaptivity as refining the time-discretization, thus increasing progressively the number of time steps.

\section{Optimal control setting}
\label{sec:optimal-control}
We next define the notation for the optimal control setting which we will use in the rest of the paper. The notation is kept as consistent as possible with section \ref{sec:deep-learning} in order to further enhance the similarity between the usual deep learning approach and the present one.

Like in section \ref{sec:deep-learning}, we assume that we are given a \emph{domain set} $X\subset \bR^n$ and a \emph{label set} $Y\subset \bR^k$, with $n,\, k\in \bN$, and that there exists an unknown probability distribution $\mu$ on $X\times Y$ representing the distribution of the input-target pairs $(x,y)$. Consider now a set of admissible controls or training weights $\Theta \subseteq \bR^m$. Usually, we set $\Theta = \bR^m$ in deep learning but the present methodology allows to consider constraints we will denote $\Theta \subseteq [U_{\mathrm{min}},U_{\mathrm{max}}]^m$.
Fix $T>0$ and let $f$ (feed-forward dynamics), $\Phi$ (terminal loss function) and $R$ (regularizer) be functions
$$
f:\bR^n \times \Theta \to \bR^n,\quad \Phi:\bR^n\times \bR^k \to \bR, \quad R: \Theta \to \bR.
$$
Let $\st$ be the set of essentially bounded measurable controls. In the following, we use bold-faced letters for path-time quantities. For example, $\btheta \coloneqq \{ \theta_t \,:\, 0\leq t \leq T \}$ for any $\btheta\in\st$. For every control $\btheta\in\st$ and every value of the random variable $x \in X$, we define the state dynamics $\bu^{\btheta, x} \coloneqq \{ \u_t^{\btheta, x } \,:\, 0\leq t \leq T \}$ as the solution to the ordinary differential equation (ODE),
\begin{equation}
\label{eq:ODE}
\begin{cases}
\dot{ \u}^{\btheta, x}_t &= f(\u^{\btheta, x}_t, \theta_t),\quad \forall t\in (0,T) \\
\u^{\btheta, x}_0 &= x
\end{cases}
\end{equation}
The ODE is stochastic and its only source of randomness is the initial condition $x$.

With this notation, the deep learning optimization problem can be posed as the optimal control problem of finding
\begin{equation}
\label{eq:oc} 
J^* = \inf_{\btheta\in\st}\cJ(\btheta) ,\quad \text{subject to \eqref{eq:ODE}},
\end{equation}
where
\begin{equation}
\cJ(\btheta) \coloneqq \bE_{(x,y)\sim\mu}  \left[ \loss(x, y, \btheta)\right],
\end{equation}
and for any input-target pair $(x,y)\in \bR^n\times\bR^k$, the loss function is defined as
\begin{equation}
\label{eq:loss}
\loss(x, y, \btheta) \coloneqq \Phi(\u^{\btheta, x}_T, y) + \int_0^T R(\theta_t) \dt
\end{equation}
Note that the regularizer $R$ could in general also depend on the state $u$ and $\Phi$ is the actual loss function upon which we want to act. It plays the same role as the loss function $\cL$ of section \ref{sec:stat-learning}. We can relate them by introducing the mapping $g:\bR^n \to \bR^k$ from section \ref{sec:cont-form}, and setting
$$
\Phi(\u^{\btheta, x}_T, y) = \cL( g(\u^{\btheta, x}_T), y ).
$$

Like in the setting of section \ref{sec:stat-learning}, we are only given a set of $N$ samples $\cS_N= \{(x_i, y_i)\}_{i=1}^N$ and the probability distribution $\mu$ is not exactly known. For a given control $\btheta\in\st$, each sample follows the dynamics
\begin{equation}
\label{eq:ODEsample}
\begin{cases}
\dot \u^{\btheta,i}_t &= f(u^{\btheta, i}_t, \theta_t),\quad \forall t\in (0,T) \\
 u_0^{\btheta, i} &= x_i
\end{cases}
\end{equation}
for $i=1,\dots,N$. We then perform empirical risk minimization taking, e.g., the uniform distribution $\mu_N = \frac 1 N \sum_{i=1}^N \delta_{(x_i, y_i)} $. The optimal control problem becomes
\begin{align}
\label{eq:ocs}
J^*_{\cS_N} = \inf_{\btheta\in\st} \cJ_{\cS_N}(\btheta),\quad \text{subject to \eqref{eq:ODEsample}},
\end{align}
where

\begin{align}
  \label{XXXXXX} 
  \cJ_{\cS_N}(\btheta) \coloneqq \bE_{(x,y) \sim \mu_N} \loss(x, y, \btheta) = \frac 1 N \sum_{i=1}^N  \Phi(u_T^{\btheta,i}, y_{i}) + \int_0^T R(\theta_t) \dt  
\end{align}

Note that the solutions of the sampled optimal control problem \eqref{eq:ocs} depend on the sample set $\cS_N$ and are therefore random variables.
However, for $\cS_N$ fixed,  problem \eqref{eq:ocs} is deterministic and can thus be solved with deterministic optimal control techniques.
In what follows, we adopt this viewpoint first to solve problem \eqref{eq:ocs} at any target accuracy. By this we mean the following: for any given target accuracy $\e>0$, we prove in Theorem \ref{th:conv-AMSA} that it is possible to numerically compute a control $\btheta_\e$ such that
$$
\cJ_{\cS_N}(\btheta_\e) - J^*_{\cS_N} \leq \e.
$$
We then come back to the probabilistic point of view and prove in Corollary \ref{cor:final-conv-result} that, provided that the number $N$ of samples is sufficiently large, there exists a constant $C>0$ such that
$$
\cJ_{\cS_N}(\btheta_\e) - J^* \leq C \e.
$$
with high probabillity. The derivation of the latter bound relies on the results of \cite{EHL2019}.

\section{Pontryagin's Maximum Principle and Method of Successive Approximations}
\label{sec:PMP-EMSA}

In this section, we focus on the \emph{sampled} optimal control problem \eqref{eq:ocs}. We first recall the necessary optimality conditions, usually known as the Pontryangin's Maximum Principle. We next introduce the main algorithm to solve the PMP which will be the starting point for our subsequent developments. In the following, the Euclidean norm of any vector $a\in \bR^n$ is denoted by $\Vert a \Vert$ and the scalar product with any other vector $b\in \bR^n$ is $a\cdot b$.

\subsection{Pontryangin's Maximum Principle}
We define the Hamiltoninan $H:[0,T]\times \bR^n\times \bR^n \times \Theta \to \bR$ as
\begin{equation}
\label{eq:H}
H(t,u,p,\theta) \coloneqq p \cdot f( u, \theta) - R(\theta).
\end{equation}
The following classical Pontryagin's Maximum Principle (PMP) gives necessary optimality conditions to problem \eqref{eq:ocs}.

\begin{theorem}
  Let $\btheta^*\in \st$ be an optimal control to \eqref{eq:ocs}.
  For $i=1,\dots,N$, let $\bu^{\btheta^*, i}$ be the associated state process with initial condition $x_i$.
  There exists a co-state process $\bp^{\btheta^*, i}\in \sp$ such that
\begin{align}
\dot u_t^{\btheta^*, i} &= f(  u_t^{\btheta^*, i} , \theta^*_t  ),\quad u_0^{\btheta^*, i} = x_i, \\
\dot p_t^{\btheta^*, i} &= -\nabla_u H( t,  u_t^{\btheta^*, i} , p_t^{\btheta^*, i}, \theta^*_t  ),\quad p_T^{\btheta^*, i} = -\nabla_u \Phi(u_T^{\btheta^*, i}, y_i),
\end{align}
and, for each $t\in [0,T]$,
\begin{align}
\label{eq:max-PMP-classic}
\frac 1 N \sum_{i=1}^N H( t,  u_t^{\btheta^*, i} , p_t^{\btheta^*, i}, \theta^*_t  )
\geq
\frac 1 N \sum_{i=1}^N H( t,  u_t^{\btheta^*, i} , p_t^{\btheta^*, i}, \theta  ),\quad \forall \theta \in \Theta.
\end{align}
\end{theorem}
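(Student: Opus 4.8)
The plan is to reduce the ensemble control problem \eqref{eq:ocs} to a classical single‑trajectory optimal control problem by lifting to the product state space, and then to quote the standard Pontryagin Maximum Principle (as in, e.g., \cite{Pontryagin2018}). The point is that, although \eqref{eq:ocs} involves $N$ state processes, they share a single control $\btheta$, so stacking them yields an ordinary Mayer--Lagrange problem for one trajectory in a higher‑dimensional space.

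Concretely, I would stack the sample trajectories into an augmented state $U_t = (u_t^{\btheta,1},\dots,u_t^{\btheta,N})\in\bR^{nN}$, governed by $\dot U_t = F(U_t,\theta_t)$ with $F(U,\theta) = (f(u^1,\theta),\dots,f(u^N,\theta))$ and initial datum $(x_1,\dots,x_N)$. Under this lift, $\cJ_{\cS_N}(\btheta) = G(U_T) + \int_0^T R(\theta_t)\,\dt$ with terminal cost $G(U) = \frac1N\sum_{i=1}^N \Phi(u^i,y_i)$, and the admissible set is still $\st$. Under the standing differentiability hypotheses on $f$, $\Phi$, $R$, this is a bona fide $C^1$ control problem, so the classical PMP applies to the optimal pair $(U,\btheta^*)$: there is an absolutely continuous co‑state $P\in L^\infty([0,T],\bR^{nN})$ with $\dot P_t = -\nabla_U \mathcal{H}(t,U_t,P_t,\theta_t^*)$, $P_T = -\nabla_U G(U_T)$, and $\mathcal{H}(t,U_t,P_t,\theta_t^*) \ge \mathcal{H}(t,U_t,P_t,\theta)$ for all $\theta\in\Theta$ and a.e.\ $t$, where $\mathcal{H}(t,U,P,\theta) = P\cdot F(U,\theta) - R(\theta)$.

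It then remains to unpack these conditions componentwise. Writing $P_t = (P_t^1,\dots,P_t^N)$ and rescaling $p_t^{\btheta^*,i} \coloneqq N\, P_t^i$ (this normalization is forced by the factor $\frac1N$ in $G$), the block structure of $F$ gives $\dot p_t^{\btheta^*,i} = -\nabla_u H(t,u_t^{\btheta^*,i},p_t^{\btheta^*,i},\theta_t^*)$, since the $-R(\theta)$ term of $H$ is $u$‑independent and $\nabla_u H$ is linear in $p$; the terminal condition becomes $p_T^{\btheta^*,i} = -N\cdot\frac1N\nabla_u\Phi(u_T^{\btheta^*,i},y_i) = -\nabla_u\Phi(u_T^{\btheta^*,i},y_i)$; and substituting $P^i = p^i/N$ rewrites $\mathcal{H}(t,U,P,\theta) = \frac1N\sum_{i=1}^N \left(p^i\cdot f(u^i,\theta) - R(\theta)\right) = \frac1N\sum_{i=1}^N H(t,u^i,p^i,\theta)$, so the Hamiltonian inequality is exactly \eqref{eq:max-PMP-classic}. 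One should also note that for $\Theta$ compact the inequality is attained as a genuine maximum, while for $\Theta=\bR^m$ it is read as $\theta_t^*$ being a global maximizer of $\theta\mapsto \frac1N\sum_i H(t,u_t^i,p_t^i,\theta)$.

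The only genuinely delicate ingredient is the one buried inside the classical PMP itself, and it is worth flagging in case a self‑contained argument is preferred over a citation: a first‑order needle (spike) perturbation of $\btheta^*$ at a Lebesgue point of the control, propagated through the linearized state equation and paired against the adjoint variable, must not decrease the cost, which is what forces the pointwise Hamiltonian inequality a.e.\ in $t$. Carrying this out on the lifted system requires the Lipschitz‑in‑$u$ and $C^1$ assumptions on $f$ to justify differentiability of the flow map $x\mapsto u_T^{\btheta,x}$ and enough integrability to upgrade the single‑point estimate to an almost‑everywhere statement on $[0,T]$. Everything else — the stacking, the co‑state rescaling, and the bookkeeping with the regularizer $R$ — is routine.
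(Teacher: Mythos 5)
Your proposal is correct. The paper does not actually supply a proof of this theorem -- it only cites standard optimal control references and notes that the abnormal multiplier case is omitted -- so there is no in-paper argument to compare against. Your reduction by stacking the $N$ trajectories into a single state $U_t\in\bR^{nN}$, applying the classical single-trajectory PMP to the lifted Mayer--Lagrange problem, and then rescaling the co-state by $N$ to absorb the $\tfrac1N$ in the terminal cost is the standard derivation of the sampled PMP (it is essentially the route taken in \cite{LCTE2018}), and your componentwise unpacking of the adjoint equation, terminal condition, and Hamiltonian inequality is accurate, including the bookkeeping of the $-R(\theta)$ term. The two caveats you already flag -- that the maximality condition is really an a.e.-in-$t$ statement for $L^\infty$ controls (the paper writes ``for each $t$''), and the suppression of the abnormal multiplier -- are exactly the points the paper itself glosses over, so nothing is missing.
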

The proof of this theorem and its variants can be found in any optimal control theory reference (see, e.g., \cite{Bertsekas1995, Clarke2005, AF2013}). We omitted the case of an abnormal multiplier, which will not be considered in the following.

We emphasize that the PMP is only a necessary condition, so there can be cases where the solutions to the PMP are not global optima for \eqref{eq:ocs}. Nevertheless, in practice the PMP often gives good solution candidates, and when certain convexity assumptions are satisfied the PMP becomes sufficient (see \cite{BP2007}). In the next section, we discuss the numerical methods that we take as a starting point to solve the PMP.

\subsection{MSA and Extended-MSA}
A classical algorithm to find $\btheta^*$ and the corresponding forward and co-state dynamics $\bu^{\btheta^*, i}$ and $\bp^{\btheta^*, i}$ is the Method of Successive Approximations (see \cite{CL1982}). It is a fixed-point method based on the following steps. Starting from an initial guess of the optimal control $\btheta^0$, for each $k\geq 0$
we first solve for $i=1,\dots, N$ the forward dynamics
\begin{equation}
\label{eq:fwd-exact}
\dot u_t^{\btheta^k,i}
= \nabla_p H( t, u_t^{\btheta^k, i} , p_t^{\btheta^k, i}, \theta^k_t  )
= f( u_t^{\btheta^k, i} , \theta^k_t  ),\quad u^{\btheta^k, i}_0 = x_i. 
\end{equation}
The dynamics $\bu^{\btheta^k,i} $ allows us to compute the backward dynamics
\begin{equation}
\label{eq:bwd-exact}
\dot p_t^{\btheta^k,i} = - \nabla_u H( t, u_t^{\btheta^k, i} , p_t^{\btheta^k, i}, \theta^k_t  ),\quad p^{\btheta^k, i}_T = - \nabla_u \Phi(u^{\btheta^k, i}_T, y_i). 
\end{equation}
Finally, we update the control by using the maximization condition \eqref{eq:max-PMP-classic},
\begin{equation}
\label{eq:max-step-PMP}
\theta^{k+1}_t \in  \argmax_{\theta\in \Theta} \frac 1 N \sum_{i=1}^N H( t,  u_t^{\btheta^k, i} , p_t^{\btheta^k, i}, \theta  ),\quad \forall t \in [0,T].
\end{equation}

In this form, \MSA converges only locally when it is initialized with a starting control guess that is sufficiently close to an optimal control $\btheta^*$. To overcome this limitation, an Extended MSA algorithm (\EMSA) based on an augmented Lagrangian strategy has been introduced in \cite{LCTE2018}. This algorithm is the starting point for our subsequent developments. It works as follows: fix some $\rho>0$ and define the augmented Halmitonian
\begin{equation}
\label{eq:augH}
\wH (t, u, p, \theta, v, q) \coloneqq H(t, u, p, \theta) -\frac \rho 2 \Vert v - f( u, \theta) \Vert^2 - \frac \rho 2 \Vert q + \nabla_u H(t, u, p, \theta) \Vert^2.
\end{equation}
We can now formulate an extended PMP based on $\wH$.

\begin{proposition}[Extended PMP, see \cite{LCTE2018}]
Let $\btheta^*\in \st$ be an optimal control to \eqref{eq:ocs}. For $i=1,\dots,N$, let $\bu^{\btheta^*, i}$ be the associated state process with initial condition $x_i$. There exists a co-state process $\bp^{\btheta^*, i}\in \sp$ such that
\begin{align}
\dot u_t^{\btheta^*, i} &= \nabla_p \wH( t,  u_t^{\btheta^*, i} , p_t^{\btheta^*, i}, \theta^*_t  ),\quad u_0^{\btheta^*, i} = x_i, \\
\dot p_t^{\btheta^*, i} &= -\nabla_u \wH( t,  u_t^{\btheta^*, i} , p_t^{\btheta^*, i}, \theta^*_t  ),\quad p_T^{\btheta^*, i} = -\nabla_u \Phi(u_T^{\btheta^*, i}, y_i),
\end{align}
and, for each $t\in [0,T]$,
\begin{equation}
\label{eq:max-wH}
\frac 1 N \sum_{i=1}^N \wH( t,  u_t^{\btheta^*, i} , p_t^{\btheta^*, i}, \theta^*_t  )
\geq
\frac 1 N \sum_{i=1}^N \wH( t,  u_t^{\btheta^*, i} , p_t^{\btheta^*, i}, \theta  ),\quad \forall \theta \in \Theta.
\end{equation}
\end{proposition}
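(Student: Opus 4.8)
\medskip
\noindent\textbf{Proof proposal.} The statement should follow essentially for free from the classical PMP stated just above (this is the argument of \cite{LCTE2018}), the only real work being to fix the convention for the two extra slots of $\wH$. The mechanism is that along the optimal trajectory the two quadratic penalties in the definition \eqref{eq:augH} of $\wH$ — and also their gradients in $u$ and in $p$ — vanish, so that $\wH$ generates the same Hamiltonian flow as $H$; while for the maximization condition one only needs that the penalties are non-positive and vanish at $\btheta^*$. Concretely, in \eqref{eq:max-wH} and in the two dynamical equations I would read $\wH(t,u_t^{\btheta^*,i},p_t^{\btheta^*,i},\theta)$ as $\wH(t,u_t^{\btheta^*,i},p_t^{\btheta^*,i},\theta,v_t^i,q_t^i)$ with the reference velocities frozen at the values dictated by the optimal trajectory,
$$
v_t^i \coloneqq f\!\left(u_t^{\btheta^*,i},\theta_t^*\right), \qquad q_t^i \coloneqq -\nabla_u H\!\left(t,u_t^{\btheta^*,i},p_t^{\btheta^*,i},\theta_t^*\right),
$$
so that $v_t^i-f(u_t^{\btheta^*,i},\theta_t^*)=0$ and $q_t^i+\nabla_u H(t,u_t^{\btheta^*,i},p_t^{\btheta^*,i},\theta_t^*)=0$, hence both penalties of $\wH$ vanish at the frozen point $(t,u_t^{\btheta^*,i},p_t^{\btheta^*,i},\theta_t^*,v_t^i,q_t^i)$.

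Next I would let $\bp^{\btheta^*,i}$ be precisely the co-state process produced by the classical PMP and verify the two ODEs at the frozen point. The first penalty $\tfrac\rho2\Vert v-f(u,\theta)\Vert^2$ has zero $p$-gradient and a $u$-gradient proportional to $v-f(u,\theta)$, hence zero there; the second penalty $\tfrac\rho2\Vert q+\nabla_u H\Vert^2$ has $p$- and $u$-gradients each proportional to $q+\nabla_u H$, hence zero there. Therefore $\nabla_p\wH=\nabla_p H=f(u_t^{\btheta^*,i},\theta_t^*)$ and $\nabla_u\wH=\nabla_u H$ at the optimal trajectory, so $\dot u_t^{\btheta^*,i}=\nabla_p\wH$ and $\dot p_t^{\btheta^*,i}=-\nabla_u\wH$ reduce to the state and co-state equations of the classical PMP (with the same terminal condition $p_T^{\btheta^*,i}=-\nabla_u\Phi(u_T^{\btheta^*,i},y_i)$), which hold by that theorem. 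This step uses $C^2$-regularity of $f$ to differentiate $q+\nabla_u H$ in $u$, which is part of the standing hypotheses under which the classical PMP is quoted.

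Finally, for the maximization \eqref{eq:max-wH} I would fix $t$ and set $S(\theta)\coloneqq\frac1N\sum_{i=1}^N H(t,u_t^{\btheta^*,i},p_t^{\btheta^*,i},\theta)$ and $\widetilde S(\theta)\coloneqq\frac1N\sum_{i=1}^N \wH(t,u_t^{\btheta^*,i},p_t^{\btheta^*,i},\theta,v_t^i,q_t^i)$. Since each penalty is non-positive, $\widetilde S(\theta)\le S(\theta)$ for all $\theta\in\Theta$; since both penalties vanish at $\theta_t^*$, $\widetilde S(\theta_t^*)=S(\theta_t^*)$; and the classical maximization \eqref{eq:max-PMP-classic} reads $S(\theta_t^*)\ge S(\theta)$ for all $\theta\in\Theta$. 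Chaining these, $\widetilde S(\theta_t^*)=S(\theta_t^*)\ge S(\theta)\ge\widetilde S(\theta)$, which is exactly \eqref{eq:max-wH}.

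\medskip
\noindent\textbf{Main obstacle.} There is no genuine analytic difficulty beyond invoking the classical PMP; the only care needed is bookkeeping — pinning down the convention for the frozen slots $v,q$ consistently in the state/co-state equations and in the maximization, and justifying differentiation of the quadratic penalties (whence the $C^2$ assumption on $f$). As in the classical statement, the abnormal-multiplier case is excluded.
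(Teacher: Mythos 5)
Your proof is correct and is essentially the argument of \cite{LCTE2018}, to which the paper defers for this proposition: the co-state from the classical PMP is reused, the frozen slots $v,q$ make both quadratic penalties and their $u$- and $p$-gradients vanish along the optimal trajectory, and the maximization follows from the penalties being non-positive in general and zero at $\theta^*_t$. Your reading of the four-argument notation $\wH(t,u,p,\theta)$ as the six-argument $\wH$ with $v,q$ frozen at the optimal velocities is exactly the intended convention.
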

The \EMSA algorithm consists in applying the MSA algorithm with the augmented Hamiltonian $\wH$ instead of with $H$. Since $\nabla_p \wH = \nabla_p H = f$ and $\nabla_u \wH = \nabla_u H$, steps \eqref{eq:fwd-exact} and \eqref{eq:bwd-exact} remain the same as in MSA and the maximation step \eqref{eq:max-step-PMP} is replaced by
\begin{equation}
\label{eq:max-step}
\theta^{k+1}_t \in  \argmax_{\theta\in \Theta}  \frac 1 N \sum_{i=1}^N \wH(t, u_t^{\btheta^k,i}, p_t^{\btheta^k, i} , \theta),\quad \forall t \in [0,T].
\end{equation}
It has been proven in \cite{LCTE2018} that if the parameter $\rho$ is taken sufficiently large,  the \EMSA algorithm converges to the set of solution of the extended PMP for any initial guess of the control $\btheta^0$. However, this scheme cannot be realized in practice without discretizing. This introduces a perturbation of the time-continuous formulation of the algorithm since the resulting trajectories will be approximations of the continuous one. To guarantee convergence to the exact continuous solution, it is necessary to identify suitable approximation error tolerances that still guarantee convergence to the exact solution. This motivates to introduce an Adaptive MSA scheme that we describe in the next section. We prove that if the forward and backward propagations are solved with increasing accuracy at each step, then the algorithm converges towards a \emph{continuous} solution of the Extended PMP.

\section{\AMSA: Adaptive Method of Successive Approximations}
\label{sec:EMSA}
The algorithm requires defining forward and backward time-integration schemes, which we introduce in section \ref{sec:practical-realization}. We next present the algorithm and how predictions are made in sections \ref{sec:AMSA-learning} and \ref{sec:AMSA-predictions}. We end up by proving convergence in section \ref{sec:conv-AMSA}.

\subsection{Routines \fwd and \bwd}
\label{sec:practical-realization}
In the following, we work with time-integration routines \fwd and \bwd which can be cast in the abstract framework that we next describe. Relevant particular instances of it will be continuous Galerkin (cG) or Runge-Kutta collocation (RK-C) methods.

\paragraph*{Setting:} We describe the framework in the case of the forward dynamics
\begin{equation}
\begin{cases}
\dot u_t &= f(u_t), \quad \forall t \in (0,T)\\
u_0 &= x 
\end{cases}
\label{eq:fwd-dyn}
\end{equation}
for a generic Lipschitz continuous function $f:\bR^n\to \bR^n$. The backward dynamics can be deduced similarly.

Let $0=t_0<t_1<\dots<t_L=T$ be a partition of $[0,T]$ and let $\cT_\ell\coloneqq (t_{\ell-1}, t_\ell]$, and $h_\ell\coloneqq t_\ell - t_{\ell-1}$. For $q\in \bN$, let $\cV_q^\cT$ be the space of continuous functions that are piecewise polynomials of degree $q$ in the time mesh $\cT = \cup_{\ell=1}^L \cT_\ell$, i.e., 
$$
\cV_q^\cT \coloneqq \{ v \in  \cC^0([0,T]; \bR^n) \, : \,  v\vert_{\cT_\ell} \in \bP^{(q)} (\cT_\ell),\quad \ell =1,\dots, L \},
$$
where
$$
\bP^{(q)}(\cT_\ell) \coloneqq \{ v \in  \cC^0(\cT_\ell; \bR^n) \, : \, \forall \, t\in \cT_\ell,\, v(t) = \sum_{j=0}^q t^j v_j,\, v_j \in \bR^n  \}
$$
is the space of polynomials of degree $q$ in the interval $\cT_\ell$. 

Introducing a \emph{projection operator}
\begin{equation}
\Pi^{(q)}: \cC^0([0,T]; \bR^n) \mapsto \cV_q^\cT
\end{equation}
the time discrete approximation $U$ to the solution $u$ is defined as follows: we seek $U\in \cV_q^\cT$ satisfying the initial condition $U(0)=x$ as well as
\begin{equation}
\label{eq:Z}
U_t' = \Pi^{(q-1)} f( U_t),\quad \forall t \in \cT_\ell,
\end{equation}
for $\ell=1,\dots,L$. Note that since both terms belong to $\bP^{(q-1)}(\cT_\ell)$, \eqref{eq:Z} admits a Galerkin formulation
\begin{equation}
\label{eq:Zgalerkin}
\int_{\cT_\ell} v \cdot U_t' \,\dt = \int_{\cT_\ell} v \cdot \Pi^{(q-1)} f( U_t)\, \dt, \quad \forall v \in \bP^{(q)}(\cT_\ell).
\end{equation}
In the following, we use mainly \eqref{eq:Z} but \eqref{eq:Zgalerkin} is of interest since it connects (RK-C) methods with (cG) methods. It is proven in \cite{AMN2009}, that the continuous Galerkin method corresponds to the choice $\Pi^{(q-1)} \coloneqq P^{(q-1)}$, with $P^{(q-1)}$ denoting the (local) $L^2$ orthogonal projection onto $\bP^{(q-1)}(\cT_\ell)$ for each $\ell$. Runge-Kutta collocation methods with pairwise distinct nodes in $\cT_\ell$ can be obtained by choosing $\Pi^{(q-1)}\coloneqq I^{(q-1)}$ with $I^{(q-1)}$ denoting the interpolation operator by elements of $\bP^{(q)}(\cT_\ell) $ at the nodes $t_{\ell-1}+\alpha_i (t_\ell-t_{\ell-1})$, $i=1,\dots, q$, $\ell=1,\dots, L$, with appropriate weights $0\leq \alpha_1 < \dots < \alpha_q \leq 1$. All RK-C methods with pairwise distinct nodes in $[0,1]$ can be obtained by applying appropriate numerical quadrature to continuous Galerkin methods.

\paragraph{$L^2$ error estimation:}
One simple way of estimating the error between $u$ and $U$ is via the following Grönwall inequality. Denoting $e_t = u_t - U_t$, we have
$$
\dot e_t = f( u_t) - \Pi^{(q-1)} f( U_t) = \left( f( u_t) - f( U_t) \right) + \left( f( U_t) - \Pi^{(q-1)} f( U_t) \right)
$$
Multiplying by $e_t$, we get
\begin{align}
\frac 1 2 \frac{\rm d}{\dt} e_t^2
&= e_t \left( f( u_t) - f( U_t) \right) + e_t \left( f( U_t) - \Pi^{(q-1)} f( U_t) \right) \\
&\leq L e_t^2 + \vert e_t \vert \vert f( U_t) - \Pi^{(q-1)} f( U_t) \vert \\
&\leq (L+1/2) e_t^2 + \frac 1 2 \vert f( U_t) - \Pi^{(q-1)} f( U_t) \vert^2,
\end{align}
where $L$ is the Lipschitz constant on the second variable of $g$. By the Grönwall inequality, since $e(0)=0$,
$$
e_t^2 \leq \int_0^t  \vert f(U_s) - \Pi^{(q-1)} f(U_s) \vert^2 e^{(2L+1)(t-s)} \ds
$$
which yields an $L^2$ estimate of the error
\begin{align}
\Vert u - U \Vert^2_{L^2([0,T], \bR^n)} 
&\leq  \int_0^T \int_0^t  \vert f( U_s) - \Pi^{(q)} f( U_s) \vert^2 e^{(2L+1)(t-s)} \ds \dt  \\
&\leq \frac{e^{(2^L+1)T}}{2L+1}
\Vert f - \Pi^{(q-1)} f \Vert^2_{L^2([0,T], \bR^n)}
\label{eq:err-bound-prop}
\end{align}
In the following, we denote $\Pi_\zeta$ a projector delivering an accuracy $\eta \geq 0$ for the involved dynamics
\begin{equation}
\Vert f - \Pi^{(q-1)} f \Vert_{L^2([0,T], \bR^n)}
\leq \eta .
\label{eq:err-pi-f}
\end{equation}
From inequality \eqref{eq:err-bound-prop}, such a projector gives an accuracy in the solution $U$ which is bounded by
$$
\Vert u - U \Vert_{L^2([0,T], \bR^n)} \leq \frac{e^{(2^L+1)T/2}}{(2L+1)^{1/2}} \eta
$$

\paragraph*{The routines:} In the following, the routine
$$
\bu^\eta = \fwd( \eta; f, x) 
$$
yields an evolution
$$
\bu^\eta = \{  u^\eta_t \, :\, t \in [0,T]\}
$$
that approximates the exact solution $\bu$ of the \emph{forward} dynamics \eqref{eq:fwd-dyn} with a projector $\Pi_\eta$. The routine
$$
\bp^\eta = \bwd( \eta; f, x) 
$$
works similarly for the \emph{backward} dynamics $\dot p_t = f( p_t)$ with $p_T = x$.

\subsection{The learning phase of the algorithm}
\label{sec:AMSA-learning}

The \AMSA algorithm consists in performing steps \eqref{eq:fwd-exact} and \eqref{eq:bwd-exact} of the original MSA but with a numerical time-integrators which give inexact trajectories. We carry the discussion assuming that we use the routines \fwd and \bwd introduced above. At every iteration $k$, both routines involves projectors $\Pi_k$ that deliver an accuracy $\e_k$ which is yet to be determined. The accuracy will be tightened as $k$ increases in a way that still guarantees convergence to the exact, continuous solution \eqref{eq:ocs} of the sampled PMP problem. We deduce its value later on from the convergence analysis of section \ref{sec:conv-AMSA}.

Starting from an initial guess of the optimal control $\btheta^0$, for each $k\geq 0$ we first solve at accuracy $\e_k$ the forward dynamics
\begin{equation}
\label{eq:inexact-fwd}
\bu^{\btheta^k, i, \e_k} = 
\fwd\left( \e_k;  (t, u) \mapsto f( u , \theta_t^k ), x_i \right), \quad \forall i=1,\dots, N.
\end{equation}
Satisfying this accuracy may require to adapt the time discretization mesh and/or the numerical scheme as discussed in the previous section.

We then use the final state $u_T^{\btheta^k, i,\e_k} $ to compute an approximation of the \emph{backward} dynamics at the same accuracy $\e_k$
\begin{equation}
\label{eq:inexact-bwd}
\bp^{\btheta^k, i, \e_k} = 
\bwd\left( \e_k;  (t, p) \mapsto - \nabla_u H( t, u_t^{\btheta^k, i, \e_k} , p, \theta^k_t  ), - \nabla_u \Phi(u^{\btheta^k, i, \e_k}_T, y_i) \right), \quad \forall i=1,\dots, N.
\end{equation}
This step may, again, require adaptivity to satisfy the target tolerance. To update the control, instead of finding the exact maximum like in \eqref{eq:max-step}, it is in fact sufficient to find controls that are away by a factor $0<\gamma_k \leq 1$ of the maximum and for which $\gamma_k \to 1$ as $k\to \infty$. In other words, it suffices to find for all $t\in [0,T]$, a control $\theta^{k+1}_t \in \Theta$ such that
\begin{equation}
\label{eq:inexact-max-step}
\frac 1 N \sum_{i=1}^N
\wH(t, u_t^{\btheta^k, i, \e_k}, p_t^{\btheta^k, i, \e_k} , \theta^{k+1}_t)
\geq
\gamma_k \max_{\theta \in \Theta} \frac 1 N \sum_{i=1}^N  \wH(t, u_t^{\btheta^k, i, \e_k}, p_t^{\btheta^k, i, \e_k} , \theta),\quad \forall t \in [0,T].
\end{equation}
Note that when $\gamma_k=1$, we fall back to the case where we look for the exact maximum.
Algorithm \ref{alg:amsa} summarizes the whole procedure by describing a routine $\AMSA[\cS, \tau]$ which computes a control dynamics $\hat \btheta$ that solves the extended PMP problem until the difference between two successive iterates is smaller than $\tau>0$. The algorithm also yields a numerical scheme associated to the parameter $\hat \e$ which is the  accuracy of the forward and backward solvers used in the last iteration. These elements are then used for prediction as explained in the next section.

The practical implementation of the algorithm requires to specify the tolerances $\e_k$. They can be theoretically derived from the convergence analysis of section \ref{sec:conv-AMSA}. However, since the bounds of the analysis may not be sharp and that it involves quantities which are difficult to estimate in practice, devote section \ref{sec:impl} to give indications on how to implement \AMSA in practice.

\begin{algorithm}
   \caption{Learning Algorithm: $\AMSA[\cS_N, \tau] \rightarrow [\hat \btheta, \hat \e] $}
  \label{alg:amsa}
  \begin{algorithmic}[1]
   \STATE \textbf{Input:} $\cS_N = \{ x_i, y_i \}_{i=1}^N$, target tolerance $\tau$
   \STATE \textbf{Set of internal parameters:} $\btheta^0$, $\rho$, maximum number of iterations $k_{\max}$.
   \STATE $k \gets 0$
   \STATE $J, J_{\text{old}}\gets J(\btheta^0)$
   \STATE $\Delta \gets \tau +1$
\WHILE{ $\Delta > \tau$ or $k\leq \kmax$}
	\STATE Update $\e_k$ and $\gamma_k$
	\FOR{$i$ in $\{1, \dots, N\}$} \COMMENT{parallel}
		\STATE $\bu^{\btheta^k, i, \e_k} = \fwd\left( \e_k;  (t, u) \mapsto f( u , \theta^k_t ), x_i \right)$
		\STATE $\bp^{\btheta^k, i, \e_k} = \bwd\left( \e_k;  (t, p) \mapsto - \nabla_x H( t, u_t^{\btheta^k, i, \e_k} , p, \theta^k_t  ), - \nabla_u \Phi(u^{\btheta^k, i, \e_k}_T, y_i) \right)$
	\ENDFOR
	\FORALL{$t\in [0,T]$} \COMMENT{parallel}
		\STATE Find $\theta^{k+1}_t$ s.t. $$\frac 1 N \sum_{i=1}^N\wH(t, u_t^{\btheta^k,i,\e_k}, p_t^{\btheta^k, i, \e_k} , \theta^{k+1}_t)
\geq \gamma_k \max_{\theta \in \Theta}  \frac 1 N \sum_{i=1}^N\wH(t, u_t^{\btheta^k,i,\e_k}, p_t^{\btheta^k, i,\e_k} , \theta)$$
	\ENDFOR
	\STATE $J_{\text{old}} \gets J$; $\quad J\gets J(\btheta^{k+1})$; $\quad\Delta \gets J_{\text{old}}  - J$
\ENDWHILE
\STATE$\hat \btheta \gets \btheta^{k+1}; \quad \hat \e\gets \e_k$
\STATE \textbf{Output:} $[\hat \btheta, \hat \e]$
\end{algorithmic}
\end{algorithm}

\subsection{Predictions}
\label{sec:AMSA-predictions}
The routine $\AMSA[\cS, \tau]$ gives a discrete control $\hat \btheta$ and an accuracy $\hat \e$. We also have a certain numerical scheme for the forward propagator, which can be interpreted as the final network architecture. As a result, for a given input data $x$, we can predict the output by first computing
$$
\hat \bu = \fwd\left( \hat \e;  (t, u) \mapsto f( u , \hat \btheta ), x \right),
$$
and then taking
$$
\hat y \coloneqq g(\hat u_T)
$$
as the approximation of the true $y$. In other words, our neural network is the mapping
\begin{align}
\cN \cN : \bR^n &\to \bR^k   \\
x &\to \cN\cN (x) \coloneqq g(\hat u_T) = g\left( \fwd\left( \hat \e;  (t, u) \mapsto f( u , \hat \btheta ), x \right)(t=T) \right)
\end{align}
We summarize the prediction pipeline in Algorithm \ref{alg:NN}.

\begin{algorithm}
   \caption{Prediction algorithm: $\cN\cN(x) \rightarrow \hat y$}
  \label{alg:adnn}
  \begin{algorithmic}[1]
   \STATE \textbf{Input:} Observation $x\in \bR^n$, Neural network architecture $[\hat \btheta, \hat \e]$ (from $\AMSA[\cS_N, \tau]$).
   \STATE $\hat \bu \gets \fwd\left( \hat \e;  (t, u) \mapsto f( u , \hat \btheta ), x \right)$
   \STATE \textbf{Output:} $g(\hat u_T)$
\end{algorithmic}
\label{alg:NN}
\end{algorithm}

\section{A priori convergence of \AMSA}
\label{sec:conv-AMSA}
In this section, we prove that, for suitably chosen tolerances $(\e_k)_k$, the sequence of controls $(\btheta^k)_{k\geq 0}$ computed with \AMSA is a minimizing sequence for problem \eqref{eq:ocs}, that is,
$$
\cJ_{\cS_N}(\btheta^k) \xrightarrow{k\rightarrow\infty} J^*_{\cS_N}.
$$
Our analysis is built upon the one presented in \cite{LCTE2018} for the exact \EMSA  to which it is necessary to add perturbative arguments due to the inexact propagations.

We work with the same continuity assumptions as in \cite{LCTE2018} for the ideal case of exact propagations:
\begin{itemize}
\item[(A1)] $\Phi$ is twice continuously differentiable. $\Phi$ and $\nabla \Phi$ satisfy the following Lipschitz condition: there exists $K>0$ such that
$$
\vert \Phi(u) - \Phi(u') \vert + \Vert \nabla \Phi (u) - \nabla\Phi(u') \Vert \leq K \Vert u - u' \Vert, \quad \forall (u, u') \in \bR^n.
$$
\item[(A2)] For all $t\in [0,T]$ and $\theta \in \Theta$, $u\to f( u, \theta )$ is twice continuously differentiable and satisfying the following Lipschitz condition: there exists $K'>0$ such that
$$
\Vert f( u, \theta) - f( u', \theta) \Vert + \Vert \nabla_u f( u, \theta) - \nabla_u f( u', \theta) \Vert_2 \leq K \Vert u - u' \Vert , \quad \forall (u, u') \in \bR^n,
$$
where $\Vert \cdot \Vert_2$ denotes the induced 2-norm.
\end{itemize}

\paragraph*{Intermediate problem:} We first study the following intermediate problem. Consider a fixed input-output pair $(x, y)$.
For a given control $\btheta\in \st$, let $\bu^{\btheta, x}$ and $\bp^{\btheta, x}$ be the exact solutions of the forward and backward propagations using $x$ and $y$ in the initial and final conditions,
\begin{align}
\begin{cases}
\dot u_t^{\btheta, x} &= f(  u_t^{\btheta, x} , \theta_t  ),\quad u_0^{\btheta, x} = x, \\
\dot p_t^{\btheta, x} &= -\nabla_u H( t,  u_t^{\btheta, x} , p_t^{\btheta, x}, \theta_t  ),\quad p_T^{\btheta, x} = -\nabla_u \Phi(u_T^{\btheta, x}, y),
\end{cases}
\end{align}
 Let $\bu^{\btheta, x, \zeta}$ and $\bp^{\btheta, x, \zeta}$ be the outputs of the same propagation but with accuracy $\zeta>0$,
\begin{align}
\begin{cases}
\bu^{\btheta, x, \zeta} &= \fwd\left( \zeta;  (t, u) \mapsto f( u , \btheta ), x \right) \\
\bp^{\btheta, x, \zeta} &= 
\bwd\left( \zeta;  (t, p) \mapsto - \nabla_u H( t, u_t^{\btheta, x, \zeta} , p, \theta_t  ), - \nabla_u \Phi(u^{\btheta, x, \zeta}_T, y) \right).
\end{cases}
\label{eq:approx-fwd-bwd}
\end{align}
 Similarly, $\bu^{\bphi, x, \eta}$ and $\bp^{\bphi, x, \eta}$ denote the $\eta$-accurate solutions but for another control $\bphi\in \st$. 
 
The loss function associated to $\bu^{\btheta, x, \zeta}$ is denoted
\begin{equation}
\label{eq:loss2}
\loss(x, y, \btheta, \zeta) \coloneqq \Phi(\u^{\btheta, x, \zeta}_T, y) + \int_0^T R(\theta_t) \dt,
\end{equation}
and similarly for $\bu^{\bphi, x, \eta}$.
In addition, we define the quantity
\begin{equation}
\label{eq:deltaH}
\Delta H^{x, \zeta}_{\bphi, \btheta}(t) \coloneqq H(t, u_t^{\btheta, \zeta}, p_t^{\btheta, \zeta}, \varphi_t)-H(t, u_t^{\btheta, \zeta}, p_t^{\btheta, \zeta}, \theta_t),
\end{equation}
which will play an essential role in what follows.

Lemma \ref{lem:diffLoss} is an important building-block in the proof of convergence of \AMSA. To not interrupt the flow of reading, we have deferred its proof to Appendix \ref{app:proof-prop}.

\begin{lemma}
\label{lem:diffLoss}
Let $\btheta,\, \bphi\in \st$ be two controls and let $(x, y)\in \bR^n\times \bR^k$ be an input-output pair. Let $(\bu^{\btheta, x, \zeta},\bp^{\btheta, x, \zeta})$ and $(\bu^{\bphi, x, \eta},\bp^{\bphi, x, \eta})$ be solutions of the forward and backward associated problems with accuracy $\zeta$ and $\eta$ like in \eqref{eq:approx-fwd-bwd}. Then, there exist a constant $C>0$ independent of $\btheta,\, \bphi$ but  dependent on $T$, $\Vert \Pi_\zeta\Vert$ and $\Vert \Pi_\eta\Vert$ such that, if $\eta\leq 1$,
\begin{align}
\label{eq:diffLoss}
&\loss(x, y, \bphi, \eta) -  \loss(x, y, \btheta, \zeta)  \\
&\qquad \leq - \int_0^T \Delta H^{x,\zeta}_{\bphi, \btheta}(t)\dt  + C \left( (\eta + \zeta)^2  + \Vert \nabla_w (\Delta H^{x,\zeta}_{\bphi, \btheta} )\Vert_{L^2([0,T])}^2 \right)
\end{align}
\end{lemma}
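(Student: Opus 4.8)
The plan is to carry out, directly on the inexact trajectories $(\bu^{\btheta,x,\zeta},\bp^{\btheta,x,\zeta})$ and $(\bu^{\bphi,x,\eta},\bp^{\bphi,x,\eta})$, the Hamiltonian/co-state duality computation that underlies the exact analysis of \cite{LCTE2018}, and to check that the time-integration residuals enter only through terms that are either quadratically small or multiplied by an a priori small quantity. \textbf{First step (splitting off the terminal loss).} By \eqref{eq:loss2}, $\loss(x,y,\bphi,\eta)-\loss(x,y,\btheta,\zeta)$ equals $\Phi(u_T^{\bphi,x,\eta},y)-\Phi(u_T^{\btheta,x,\zeta},y)$ plus $\int_0^T\bigl(R(\varphi_t)-R(\theta_t)\bigr)\,\dt$. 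Expanding $\Phi$ to first order around $u_T^{\btheta,x,\zeta}$ and using the Lipschitz bound on $\nabla\Phi$ from (A1), the first difference is at most $\nabla_u\Phi(u_T^{\btheta,x,\zeta},y)\cdot(u_T^{\bphi,x,\eta}-u_T^{\btheta,x,\zeta})+\frac K2\Vert u_T^{\bphi,x,\eta}-u_T^{\btheta,x,\zeta}\Vert^2$, and I would use that $\nabla_u\Phi(u_T^{\btheta,x,\zeta},y)=-\,p_T^{\btheta,x,\zeta}$ holds \emph{exactly}, since \bwd imposes the terminal condition of \eqref{eq:approx-fwd-bwd} exactly.

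\textbf{Second step (the duality identity).} Since the two forward trajectories agree at $t=0$, $-\,p_T^{\btheta,x,\zeta}\cdot(u_T^{\bphi,x,\eta}-u_T^{\btheta,x,\zeta})=-\int_0^T\frac{\dd}{\dt}\!\left[p_t^{\btheta,x,\zeta}\cdot(u_t^{\bphi,x,\eta}-u_t^{\btheta,x,\zeta})\right]\dt$. I would expand the integrand using the perturbed dynamics $\dot u_t^{\bphi,x,\eta}=f(u_t^{\bphi,x,\eta},\varphi_t)+r_t^{\bphi}$, $\dot u_t^{\btheta,x,\zeta}=f(u_t^{\btheta,x,\zeta},\theta_t)+r_t^{\btheta}$ and $\dot p_t^{\btheta,x,\zeta}=-\nabla_u H(t,u_t^{\btheta,x,\zeta},p_t^{\btheta,x,\zeta},\theta_t)+s_t$, where $r^{\bphi},r^{\btheta},s$ are the projection residuals, bounded in $L^2([0,T])$ by $\eta$, $\zeta$, $\zeta$ respectively through \eqref{eq:err-pi-f}. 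Using $\nabla_u H=(\nabla_u f)^{\!\top}p$, a first-order Taylor expansion of $f(\cdot,\varphi_t)$ in its state argument (quadratic remainder $E_t$ controlled by (A2), $\Vert E_t\Vert\le\frac K2\Vert\delta_t\Vert^2$ with $\delta_t\coloneqq u_t^{\bphi,x,\eta}-u_t^{\btheta,x,\zeta}$), and the identities $\nabla_p\Delta H^{x,\zeta}_{\bphi,\btheta}(t)=f(u_t^{\btheta,x,\zeta},\varphi_t)-f(u_t^{\btheta,x,\zeta},\theta_t)$, $\nabla_u\Delta H^{x,\zeta}_{\bphi,\btheta}(t)=\bigl(\nabla_u f(u_t^{\btheta,x,\zeta},\varphi_t)-\nabla_u f(u_t^{\btheta,x,\zeta},\theta_t)\bigr)^{\!\top}p_t^{\btheta,x,\zeta}$ and $p_t^{\btheta,x,\zeta}\cdot\nabla_p\Delta H^{x,\zeta}_{\bphi,\btheta}(t)=\Delta H^{x,\zeta}_{\bphi,\btheta}(t)+R(\varphi_t)-R(\theta_t)$, the $R(\varphi_t)-R(\theta_t)$ contribution cancels the running term from the first step, and one is left with $\loss(x,y,\bphi,\eta)-\loss(x,y,\btheta,\zeta)\le-\int_0^T\Delta H^{x,\zeta}_{\bphi,\btheta}(t)\,\dt$ plus five remainders: $-\int_0^T\nabla_u\Delta H^{x,\zeta}_{\bphi,\btheta}(t)\cdot\delta_t\,\dt$, $-\int_0^T s_t\cdot\delta_t\,\dt$, $-\int_0^T p_t^{\btheta,x,\zeta}\cdot E_t\,\dt$, $\frac K2\Vert\delta_T\Vert^2$, and $-\int_0^T p_t^{\btheta,x,\zeta}\cdot(r_t^{\bphi}-r_t^{\btheta})\,\dt$.

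\textbf{Third step (a priori bounds and closing).} A Grönwall estimate on $\dot\delta_t=\bigl(f(u_t^{\bphi,x,\eta},\varphi_t)-f(u_t^{\btheta,x,\zeta},\varphi_t)\bigr)+\nabla_p\Delta H^{x,\zeta}_{\bphi,\btheta}(t)+(r_t^{\bphi}-r_t^{\btheta})$ with $\delta_0=0$ and (A2) gives $\Vert\delta\Vert_{L^\infty([0,T])}\le C\bigl(\Vert\nabla_p\Delta H^{x,\zeta}_{\bphi,\btheta}\Vert_{L^2}+\eta+\zeta\bigr)$; similarly $\Vert p^{\btheta,x,\zeta}\Vert_{L^\infty}\le C$ uniformly in $\btheta$, from $\Vert\nabla\Phi\Vert\le K$, the boundedness of $\nabla_u f$, a Grönwall estimate on the backward equation, and the residual bound (the normalization $\eta\le1$ is what keeps the inexact trajectories inside a fixed compact set on which these bounds are uniform). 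Cauchy--Schwarz together with Young's inequality then bound the first four remainders by $C\bigl((\eta+\zeta)^2+\Vert\nabla_u\Delta H^{x,\zeta}_{\bphi,\btheta}\Vert_{L^2}^2+\Vert\nabla_p\Delta H^{x,\zeta}_{\bphi,\btheta}\Vert_{L^2}^2\bigr)=C\bigl((\eta+\zeta)^2+\Vert\nabla_w\Delta H^{x,\zeta}_{\bphi,\btheta}\Vert_{L^2}^2\bigr)$, each product being between two small factors.

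\textbf{The main obstacle.} The last remainder $\int_0^T p_t^{\btheta,x,\zeta}\cdot(r_t^{\bphi}-r_t^{\btheta})\,\dt$ is the crux: the co-state is $O(1)$ rather than small, so a bare Cauchy--Schwarz estimate gives only $C(\eta+\zeta)$, one power short of \eqref{eq:diffLoss}. I would handle it by exploiting the Galerkin/collocation structure of $\Pi_\eta$ and $\Pi_\zeta$: cellwise, $r^{\bphi}=(\Pi_\eta-I)\bigl[f(u_\cdot^{\bphi,x,\eta},\varphi_\cdot)\bigr]$ is orthogonal to (resp.\ has vanishing moments against) the polynomial space onto which $\Pi_\eta$ projects, so $\int p^{\btheta,x,\zeta}\cdot r^{\bphi}=\int\bigl(p^{\btheta,x,\zeta}-\Pi p^{\btheta,x,\zeta}\bigr)\cdot r^{\bphi}$ picks up an extra factor measuring how well the (piecewise-polynomial) co-state is approximated in that space --- equivalently, this is the nodal superconvergence of the cG and RK-C schemes, the terminal-time error being of strictly higher order than the $L^2$-in-time error --- which upgrades the term to $C(\eta+\zeta)^2$ at the price of a constant depending on $\Vert\Pi_\eta\Vert$ and $\Vert\Pi_\zeta\Vert$, exactly as the statement anticipates. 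Collecting the four steps yields \eqref{eq:diffLoss}.
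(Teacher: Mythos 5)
Your proposal follows essentially the same route as the paper's proof. The duality computation of your Steps 1--2 (differentiating $p_t^{\btheta,x,\zeta}\cdot\delta_t$ in time, substituting the perturbed dynamics with projection residuals, Taylor-expanding $\Phi$ and $H$, and letting the $R$-terms cancel) is an equivalent reorganization of the paper's expansion of $I(\bu^{\bphi,\eta},\bp^{\bphi,\eta},\bphi)-I(\bu^{\btheta,\zeta},\bp^{\btheta,\zeta},\btheta)=0$ into the terms (T1)--(T10), and your Step 3 matches the paper's combination of the Gr\"onwall bounds of Proposition~\ref{prop:gronwall-evol} with Cauchy--Schwarz and Young.

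Where you diverge is at your ``main obstacle'', and your diagnosis there is sharper than the paper's own treatment. The term $\int_0^T p_t\cdot(r_t^{\bphi}-r_t^{\btheta})\,\dt$ is exactly the paper's term (T2), and the paper bounds it by Cauchy--Schwarz against $\Vert p\Vert\leq K'$ and $\Vert e^{f,\cdot,\cdot}\Vert_{L^2}\leq\eta$ resp.\ $\zeta$, obtaining only $C(\eta+\zeta)$ --- first order, just as you observe; it nevertheless states the summary bound with $(\eta+\zeta)^2$. As written, the paper's argument therefore yields $C\bigl((\eta+\zeta)+\Vert\nabla_w(\Delta H^{x,\zeta}_{\bphi,\btheta})\Vert_{L^2}^2\bigr)$, which is in fact all that is needed downstream (in Theorem~\ref{th:conv-AMSA} the quadratic term is immediately relaxed to a bound linear in $\e_k$). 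Your proposed repair via Galerkin orthogonality is a genuinely new ingredient, but as sketched it does not close: writing $\int p\cdot r^{\bphi}=\int(p-\pi p)\cdot r^{\bphi}$ gains a factor $\Vert p-\pi p\Vert_{L^2}$ measuring the best approximation of the co-state in the projection space, and nothing in \eqref{eq:err-pi-f} --- which controls the residual of $f$ along the computed trajectory, not the regularity or approximability of $p$ --- forces that factor to be $O(\eta+\zeta)$; moreover, for the Runge--Kutta collocation choice $\Pi=I^{(q-1)}$ the residual is an interpolation error with no vanishing moments, so the orthogonality step is unavailable. In short: same skeleton as the paper, a correctly identified soft spot that the paper glosses over, and a proposed fix that would require additional regularity and mesh assumptions --- the pragmatic alternative being to state and use the lemma with a first-order term $C(\eta+\zeta)$ in place of $C(\eta+\zeta)^2$.
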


\paragraph*{Convergence of \AMSA:}
We can use Lemma \ref{lem:diffLoss} to prove our main convergence result which we give in the following theorem. It involves the quantity
\begin{align}
\lambda_k^2 &\coloneqq \frac 1 N \sum_{i=1}^N \Vert \nabla_w (\Delta H^{i, \e_k}_{\btheta^{k+1}, \btheta^k} )\Vert_{L^2([0,T])}^2 \\
&=\frac 1 N \sum_{i=1}^N \left( \int_0^T \Vert f( u_t^{\btheta^k, i, \e_k}, \theta_t^{k+1}) - f( u_t^{\btheta^k, i, \e_k}, \theta_t^k) \Vert^2 \dt \right. \\
&+ \left. \int_0^T \Vert \nabla_u H(t, u_t^{\btheta^k, i, \e_k}, p_t^{\btheta^k, i, \e_k}, \theta_t^{k+1}) - \nabla_u H(t, u_t^{\btheta^k, i, \e_k}, p_t^{\btheta^k, i, \e_k}, \theta_t^k) \Vert^2 \dt \right)
\end{align}

\begin{theorem}
\label{th:conv-AMSA}
Let $\delta > 0$ be a fixed constant and suppose that we run the \AMSA algorithm with the following accuracies:
\begin{itemize}
\item $(\e_k)$ is a positive and decreasing sequence such that
$$
0\leq \e_k \leq  \min\left( \e_{k-1}, \frac{ \delta \lambda_k^2 }{ 4C + \rho N^{-1/2}\lambda_k}\right),\quad \forall k \geq 1.
$$
\item The maximization step \eqref{eq:inexact-max-step} is performed with the parameter
$$
\gamma_k \geq
\frac{\big\vert \sum_{i=1}^N
\wH(t, u_t^{\btheta^k, i, \e_k}, p_t^{\btheta^k, i, \e_k} , \theta^{k+1}_t)/N \big\vert}{\delta \lambda_k^2 + \big\vert \sum_{i=1}^N
\wH(t, u_t^{\btheta^k, i, \e_k}, p_t^{\btheta^k, i, \e_k} , \theta^{k+1}_t)/N \big\vert}
,\quad \forall k \in \bN.
$$
\end{itemize}
Then, if $\rho > 2(C+2\delta+\e_0)$, where $C$ is the constant of Lemma \ref{lem:diffLoss}, the sequence of controls $(\btheta^k)$ of \AMSA is a minimizing sequence for problem \eqref{eq:ocs}, that is,
$$
\cJ_{\cS_N}(\btheta^k) \to_{k\to \infty} J^*_{\cS_N}.
$$
Furthermore, $\lambda^2_k\to 0$ as $k\to \infty$ and we have
$$
\cJ_{\cS_N}(\btheta^{k+1}) - \cJ_{\cS_N}(\btheta^k) \leq - \kappa \lambda^2_k, \quad \kappa \coloneqq \vert C+2\delta - \frac \rho 2 \vert >0,\quad \forall k\in \bN.
$$
\end{theorem}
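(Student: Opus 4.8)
The strategy is to turn Lemma~\ref{lem:diffLoss} into a one-step descent estimate for $\cJ_{\cS_N}$ along the iterates of \AMSA, and then conclude by a standard monotonicity-plus-boundedness argument, exactly as in the exact \EMSA analysis of \cite{LCTE2018}, but carrying along the perturbations caused by the inexact propagations. Throughout, set $W^j(t)\coloneqq\frac1N\sum_{i=1}^N\wH(t,u_t^{\btheta^k,i,\e_k},p_t^{\btheta^k,i,\e_k},\theta^j_t)$ for $j\in\{k,k+1\}$, and recall that $\cJ_{\cS_N}(\btheta)=\frac1N\sum_i\loss(x_i,y_i,\btheta,0)$, i.e.\ the exact loss corresponding to accuracy $\zeta=0$.

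\emph{Step 1 (from the loss increment to $\Delta H$).} For each sample I apply Lemma~\ref{lem:diffLoss} twice: once with $(\bphi,\eta)=(\btheta^{k+1},0)$ and $(\btheta,\zeta)=(\btheta^k,\e_k)$, and once with $\bphi=\btheta=\btheta^k$, $\eta=\e_k$, $\zeta=0$, so that $\Delta H^{x_i,0}_{\btheta^k,\btheta^k}\equiv 0$. Summing the two inequalities, averaging over $i$, and using $\frac1N\sum_i\Vert\nabla_w(\Delta H^{i,\e_k}_{\btheta^{k+1},\btheta^k})\Vert^2_{L^2([0,T])}=\lambda_k^2$ yields
\begin{equation}
\cJ_{\cS_N}(\btheta^{k+1})-\cJ_{\cS_N}(\btheta^k)\ \leq\ -\int_0^T\frac1N\sum_{i=1}^N\Delta H^{i,\e_k}_{\btheta^{k+1},\btheta^k}(t)\,\dt\ +\ 2C\e_k^2\ +\ C\lambda_k^2 .
\end{equation}

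\emph{Step 2 ($\Delta H$ versus the maximization gain).} Using $H=\wH+\frac\rho2\Vert\cdot\Vert^2+\frac\rho2\Vert\cdot\Vert^2$, evaluated along the approximate trajectories (whose velocities are $\Pi_{\e_k}f$ and $-\Pi_{\e_k}\nabla_uH$, not $f$ and $-\nabla_uH$), one gets for every $t$
\begin{equation}
\frac1N\sum_{i=1}^N\Delta H^{i,\e_k}_{\btheta^{k+1},\btheta^k}(t)=\big(W^{k+1}(t)-W^k(t)\big)+\frac{\rho}{2N}\sum_{i=1}^N\big(\mathrm{pen}^{k+1}_i(t)-\mathrm{pen}^k_i(t)\big),
\end{equation}
where $\mathrm{pen}^j_i\geq0$ is the sum of the two quadratic penalties at $\theta^j_t$. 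At $\theta^k$ the penalties are exactly the squared projection errors, hence $\frac1N\sum_i\int_0^T\mathrm{pen}^k_i\le2\e_k^2$; writing $f(u^i,\theta^{k+1})=f(u^i,\theta^k)+[f(u^i,\theta^{k+1})-f(u^i,\theta^k)]$ (and similarly for $\nabla_uH$), expanding the squares, applying Cauchy--Schwarz and the definition of $\lambda_k^2$ gives $\frac{\rho}{2N}\sum_i\int_0^T(\mathrm{pen}^{k+1}_i-\mathrm{pen}^k_i)\,\dt\ \geq\ \tfrac\rho2\lambda_k^2-c\,\rho N^{-1/2}\e_k\lambda_k-\rho\e_k^2$ for an absolute constant $c$. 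On the other hand, the inexact maximization $W^{k+1}(t)\ge\gamma_k\max_\theta W(t,\theta)$ together with $\max_\theta W(t,\theta)\le W^{k+1}(t)/\gamma_k$ gives $-(W^{k+1}(t)-W^k(t))\le\frac{1-\gamma_k}{\gamma_k}|W^{k+1}(t)|$, and the prescribed lower bound on $\gamma_k$ is exactly $\frac{1-\gamma_k}{\gamma_k}|W^{k+1}(t)|\le\delta\lambda_k^2$ after rearrangement (with a short case split on the sign of $\max_\theta W(t,\theta)$, the case $\gamma_k=1$ being trivial). Plugging both bounds into Step~1,
\begin{equation}
\cJ_{\cS_N}(\btheta^{k+1})-\cJ_{\cS_N}(\btheta^k)\ \leq\ \Big(C+\delta-\frac\rho2\Big)\lambda_k^2+c\,\rho N^{-1/2}\e_k\lambda_k+(2C+\rho)\e_k^2 ,
\end{equation}
and the hypotheses $\e_k\le\delta\lambda_k^2/(4C+\rho N^{-1/2}\lambda_k)$ and $\e_k\le\e_0$ are precisely what absorb the last two terms into $\delta\lambda_k^2$, while $\rho>2(C+2\delta+\e_0)$ makes $\kappa\coloneqq\rho/2-C-2\delta>0$, giving $\cJ_{\cS_N}(\btheta^{k+1})-\cJ_{\cS_N}(\btheta^k)\le-\kappa\lambda_k^2$.

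\emph{Step 3 (conclusion).} The sequence $(\cJ_{\cS_N}(\btheta^k))_k$ is non-increasing and bounded below by $J^*_{\cS_N}$, hence convergent; telescoping the descent inequality gives $\kappa\sum_k\lambda_k^2<\infty$, so $\lambda_k^2\to0$, and then $\gamma_k\to1$ from its defining inequality. Since $\lambda_k$ controls the defect in the maximization condition \eqref{eq:inexact-max-step} of the extended PMP, one concludes as in \cite{LCTE2018}: any weak-$\ast$ limit point of $(\btheta^k)$ in $\st$ solves the extended PMP, and for $\rho$ large enough extended-PMP solutions are global minimizers of \eqref{eq:ocs}; combined with the convergence of $\cJ_{\cS_N}(\btheta^k)$ this forces $\cJ_{\cS_N}(\btheta^k)\to J^*_{\cS_N}$.

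\emph{Main obstacle.} The delicate part is Step~2: propagating the inexactness through the quadratic penalties of $\wH$ while keeping the error on the scale of $\lambda_k^2$. Because the approximate velocities are $\Pi_{\e_k}f$ rather than $f$, the penalty at $\theta^k$ is no longer zero and cross terms of order $\rho N^{-1/2}\e_k\lambda_k$ and $C\e_k$ appear; controlling them against $\delta\lambda_k^2$ is exactly what dictates the admissible tolerance $\e_k\lesssim\delta\lambda_k^2/(C+\rho N^{-1/2}\lambda_k)$ and forces $\e_k\to0$, i.e.\ it is the analytic justification of the shallow-to-deep refinement. One must also keep in mind that $\lambda_k$ is itself defined through the approximate trajectories, so the estimates are genuinely coupled, and carry out the sign bookkeeping that turns the $\gamma_k$-inequality into $-(W^{k+1}-W^k)\le\delta\lambda_k^2$.
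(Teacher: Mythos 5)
Your proposal is correct and follows essentially the same route as the paper's proof: apply Lemma \ref{lem:diffLoss} to get a one-step bound in terms of $-\int_0^T \frac1N\sum_i\Delta H^{i,\e_k}_{\btheta^{k+1},\btheta^k}$, use the quadratic penalties in $\wH$ together with the expansion $\dot u^{\btheta^k,i,\e_k}_t=\Pi_{\e_k}f$ and Cauchy--Schwarz to bound that integral by $-\tfrac\rho2\lambda_k^2$ plus terms of order $\e_k\lambda_k$ and $\e_k^2$, convert the $\gamma_k$-condition into an additive $\delta\lambda_k^2$ defect, absorb everything via the hypotheses on $\e_k$ and $\rho$, and telescope. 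The only (minor, and arguably cleaner) deviation is your Step~1, where you apply the lemma twice so as to compare the exact losses $\cJ_{\cS_N}(\btheta^{k+1})-\cJ_{\cS_N}(\btheta^k)$ directly, whereas the paper simply identifies $\cJ_{\cS_N}(\btheta^j)$ with the inexact loss $\frac1N\sum_i\loss(x_i,y_i,\btheta^j,\e_j)$; your final passage from $\lambda_k\to0$ to $\cJ_{\cS_N}(\btheta^k)\to J^*_{\cS_N}$ is deferred to \cite{LCTE2018} at the same level of detail as in the paper.
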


\begin{proof}
We start by applying inequality \eqref{eq:diffLoss} from Lemma \ref{lem:diffLoss} to the controls $\bphi = \btheta^{k+1}$ and $\btheta=\btheta^k$ of the \AMSA algorithm, with accuracies $\eta = \e_{k+1}$ and $\zeta = \e_k$ and for  the  samples $(x,y)=(x_i, y_i)$. We then take the average over $i$ and obtain
\begin{align}
\cJ_{\cS_N}(\btheta^{k+1}) - \cJ_{\cS_N}(\btheta^k)&= \frac 1 N \sum_{i=1}^N \loss(x_i, y_i, \btheta^{k+1}, \e_{k+1}) -  \loss(x_i, y_i, \btheta^k, \e_k)  \\
&\leq -\, \frac 1 N \sum_{i=1}^N \int_0^T \Delta H^{i, \e_k}_{\btheta^{k+1}, \btheta^k}(t)\dt +C \left( (\e_k + \e_{k+1})^2  + \lambda_k^2\right) \label{eq:diffJk}
\end{align}
We first do the proof when $\gamma_k=1, \forall k\geq 1$. In this case, from the maximization step \eqref{eq:inexact-max-step} in \AMSA,
\begin{align}
\frac 1 N \sum_{i=1}^N
\wH(t, u_t^{\btheta^k, i, \e_k}, p_t^{\btheta^k, i, \e_k} , \theta^{k+1}_t, \dot u_t^{\btheta^k, i, \e_k}, \dot p_t^{\btheta^k, i, \e_k})
&\geq
\frac 1 N \sum_{i=1}^N
\wH(t, u_t^{\btheta^k, i, \e_k}, p_t^{\btheta^k, i, \e_k} , \theta^{k}_t, \dot u_t^{\btheta^k, i, \e_k}, \dot p_t^{\btheta^k, i, \e_k}).
\end{align}
Recalling definition \eqref{eq:augH} for $\wH$, we reassemble the terms in the above inequality to derive
\begin{align}
&-\, \frac 1 N \sum_{i=1}^N \int_0^T \Delta H^{i, \e_k}_{\btheta^{k+1}, \btheta^k}(t)\dt \\
& \leq
- \frac{\rho}{2N} \sum_{i=1}^N \int_0^T
\left(
\Vert \dot u_t^{\btheta^k, i, \e_k} - f( u_t^{\btheta^k, i, \e_k}, \theta^{k+1}_t) \Vert^2
+
\Vert \dot p_t^{\btheta^k, i, \e_k} - \nabla_u H(t, u_t^{\btheta^k, i, \e_k}, p_t^{\btheta^k, i, \e_k},  \theta^{k+1}_t) \vert^2 
\right)\dt  \\
&+
\frac{\rho}{2N} \sum_{i=1}^N \int_0^T
\left(
\Vert \dot u_t^{\btheta^k, i, \e_k} - f( u_t^{\btheta^k, i, \e_k}, \theta^{k}_t) \Vert^2
+
\Vert \dot p_t^{\btheta^k, i, \e_k} - \nabla_u H(t, u_t^{\btheta^k, i, \e_k}, p_t^{\btheta^k, i, \e_k},  \theta^{k}_t) \Vert^2 
\right)\dt
\label{eq:bound-Delta}
\end{align}
Since by construction
$$
\dot u_t^{\btheta^k, i, \e_k}  = \Pi_{\e_k} f( u_t^{\btheta^k, i, \e_k}, \theta^{k}_t)
$$
and
$$
\Vert  f( u_t^{\btheta^k, i, \e_k}, \theta^{k}_t) - \Pi_{\e_k} f( u_t^{\btheta^k, i, \e_k}, \theta^{k}_t) \Vert_{L^2([0,T], \bR^n)} \leq \e_k,
$$
we have by Cauchy-Schwartz inequality
\begin{align}
\int_0^T
&\Vert \dot u_t^{\btheta^k, i, \e_k} - f( u_t^{\btheta^k, i, \e_k}, \theta^{k+1}_t) \Vert^2 \dt \\
&= \int_0^T \Vert  f( u_t^{\btheta^k, i, \e_k}, \theta^{k}_t) - \Pi f( u_t^{\btheta^k, i, \e_k}, \theta^{k}_t) \Vert^2  +\Vert f( u_t^{\btheta^k, i, \e_k}, \theta^{k}_t) - f( u_t^{\btheta^k, i, \e_k}, \theta^{k+1}_t) \Vert^2  \\
& - 2 \left<  f( u_t^{\btheta^k, i, \e_k}, \theta^{k}_t) - \Pi f( u_t^{\btheta^k, i, \e_k}, \theta^{k}_t), f( u_t^{\btheta^k, i, \e_k}, \theta^{k}_t) - f( u_t^{\btheta^k, i, \e_k}, \theta^{k+1}_t) \right> \dt \\
&\geq \int_0^T \Vert  f( u_t^{\btheta^k, i, \e_k}, \theta^{k}_t) - \Pi f( u_t^{\btheta^k, i, \e_k}, \theta^{k}_t) \Vert^2  +\Vert f( u_t^{\btheta^k, i, \e_k}, \theta^{k}_t) - f( u_t^{\btheta^k, i, \e_k}, \theta^{k+1}_t) \Vert^2 \,\dt \\
& - 2\e_k
\left( \int_0^T \Vert f( u_t^{\btheta^k, i, \e_k}, \theta^{k}_t) - f( u_t^{\btheta^k, i, \e_k}, \theta^{k+1}_t) \Vert^2 \dt \right)^{1/2}
\end{align}
Since a similar bound holds for $\int_0^T \Vert \dot p_t^{\btheta^k, i, \e_k} - \nabla_u H(t, u_t^{\btheta^k, i, \e_k}, p_t^{\btheta^k, i, \e_k},  \theta^{k+1}_t) \Vert^2 \dt$, we derive that
\begin{align}
&-\, \frac 1 N \sum_{i=1}^N \int_0^T \Delta H^{i, \e_k}_{\btheta^{k+1}, \btheta^k}(t)\dt
\leq  
-\frac{\rho}{2}\lambda_k^2 +  \rho N^{-1/2} \e_k \lambda_k
\label{eq:bound-Delta-gamma-1}
\end{align}
Using \eqref{eq:bound-Delta-gamma-1}, we can further bound \eqref{eq:diffJk} to obtain
\begin{align}
\cJ_{\cS_N}(\btheta^{k+1}) - \cJ_{\cS_N}(\btheta^k) 
&\leq
-\frac{\rho}{2}\lambda_k^2 +  \rho N^{-1/2} \e_k \lambda_k
+C \left( (\e_k + \e_{k+1})^2  + \lambda_k^2\right) \\
&\leq \left(C - \frac{\rho}{2}\right) \lambda_k^2 + \e_k (4C +  \rho N^{-1/2} \lambda_k)
\label{eq:diff-J}
\end{align}
where we have used that $(\e_k)$ is a decreasing sequence.

Since, for a fixed $\delta>0$, we have chosen $\e_k$ such that $
\e_k  \leq  \delta \lambda_k^2 / ( 4C + \rho N^{-1/2}\lambda_k)$, we infer that
$$
\cJ_{\cS_N}(\btheta^{k+1}) - \cJ_{\cS_N}(\btheta^k) \leq \left(C+\delta - \frac \rho 2 \right) \lambda_k^2.
$$
As a result, fixing $\rho > 2(C+\delta)$,
$$
\cJ_{\cS_N}(\btheta^{k+1}) - \cJ_{\cS_N}(\btheta^k) \leq - \tilde \kappa \lambda^2_k, \quad \tilde \kappa \coloneqq \big\vert C+\delta - \frac \rho 2 \big\vert >0 .
$$
Moreover, we can rearrange and sum over $k=0$ to $K$ the above expression to get
$$
\sum_{k=0}^K \lambda^2_k \leq \tilde \kappa^{-1} \left( \cJ_{\cS_N}(\btheta^0) - \cJ_{\cS_N}(\btheta^{K+1}) \right) \leq \tilde \kappa^{-1} \left( \cJ_{\cS_N}(\btheta^0) - \inf_{\btheta\in\st}\cJ_{\cS_N}(\btheta) \right).
$$
Therefore $\sum_{k=0}^K \lambda^2_k < +\infty$ which implies that $\lambda^2_k\to_{k\to \infty} 0$. Therefore, the \AMSA converges to a solution of the extended PMP.

Let us consider now the general case $0<\gamma_k\leq 1$. From the maximization step \eqref{eq:inexact-max-step},
\begin{equation}
\frac 1 N \sum_{i=1}^N
\wH(t, u_t^{\btheta^k, i, \e_k}, p_t^{\btheta^k, i, \e_k} , \theta^{k+1}_t)
\geq
\gamma_k \max_{\theta \in \Theta} \frac 1 N \sum_{i=1}^N  \wH(t, u_t^{\btheta^k, i, \e_k}, p_t^{\btheta^k, i, \e_k} , \theta),\quad \forall t \in [0,T].
\end{equation}
As a result, bound \eqref{eq:bound-Delta} receives an additional term and becomes,
\begin{align}
&-\, \frac 1 N \sum_{i=1}^N \int_0^T \Delta H^{i, \e_k}_{\btheta^{k+1}, \btheta^k}(t)\dt \\
& \leq
- \frac{\rho}{2N} \sum_{i=1}^N \int_0^T
\left(
\Vert \dot u_t^{\btheta^k, i, \e_k} - f( u_t^{\btheta^k, i, \e_k}, \theta^{k+1}_t) \Vert^2
+
\Vert \dot p_t^{\btheta^k, i, \e_k} - \nabla_u H(t, u_t^{\btheta^k, i, \e_k}, p_t^{\btheta^k, i, \e_k},  \theta^{k+1}_t) \Vert^2 
\right)\dt  \\
&+
\frac{\rho}{2N} \sum_{i=1}^N \int_0^T
\left(
\Vert \dot u_t^{\btheta^k, i, \e_k} - f( u_t^{\btheta^k, i, \e_k}, \theta^{k}_t) \Vert^2
+
\Vert \dot p_t^{\btheta^k, i, \e_k} - \nabla_u H(t, u_t^{\btheta^k, i, \e_k}, p_t^{\btheta^k, i, \e_k},  \theta^{k}_t) \Vert^2 
\right)\dt \\
&+
\frac{\gamma_k^{-1}-1}{N} \big\vert \sum_{i=1}^N
\wH(t, u_t^{\btheta^k, i, \e_k}, p_t^{\btheta^k, i, \e_k} , \theta^{k+1}_t) \big\vert.
\end{align}
It follows that bound \eqref{eq:bound-Delta-gamma-1} receives the same additional term
\begin{align}
&-\, \frac 1 N \sum_{i=1}^N \int_0^T \Delta H^{i, \e_k}_{\btheta^{k+1}, \btheta^k}(t)\dt
 \leq -\frac{\rho}{2}\lambda_k^2 +  \rho N^{-1/2} \e_k \lambda_k
+
\frac{\gamma_k^{-1}-1}{N} \big\vert\sum_{i=1}^N
\wH(t, u_t^{\btheta^k, i, \e_k}, p_t^{\btheta^k, i, \e_k} , \theta^{k+1}_t) \big\vert.
\end{align}
Therefore \eqref{eq:diff-J} becomes
\begin{align}
\cJ_{\cS_N}(\btheta^{k+1}) - \cJ_{\cS_N}(\btheta^k) 
&\leq
\left(C - \frac{\rho}{2}\right) \lambda_k^2 + \e_k (4C +  \rho N^{-1/2} \lambda_k)
+
\frac{\gamma_k^{-1}-1}{N} \big\vert \sum_{i=1}^N
\wH(t, u_t^{\btheta^k, i, \e_k}, p_t^{\btheta^k, i, \e_k} , \theta^{k+1}_t) \big\vert.
\end{align}
Choosing $\e_k$ like before and setting
$$
\gamma_k \geq
\frac{\big\vert \sum_{i=1}^N
\wH(t, u_t^{\btheta^k, i, \e_k}, p_t^{\btheta^k, i, \e_k} , \theta^{k+1}_t)/N \big\vert}{\delta \lambda_k^2 + \big\vert \sum_{i=1}^N
\wH(t, u_t^{\btheta^k, i, \e_k}, p_t^{\btheta^k, i, \e_k} , \theta^{k+1}_t)/N \big\vert}
$$
yields
$$
\cJ_{\cS_N}(\btheta^{k+1}) - \cJ_{\cS_N}(\btheta^k) \leq \left(C+ 2\delta - \frac \rho 2 \right) \lambda_k^2.
$$
We conclude along the same lines as before by fixing now $\rho > 2(C+2\delta)$, and inferring that
$$
\cJ_{\cS_N}(\btheta^{k+1}) - \cJ_{\cS_N}(\btheta^k) \leq - \kappa \lambda^2_k, \quad \kappa \coloneqq \big\vert C+2\delta - \frac \rho 2 \big\vert >0 .
$$
\end{proof}

Before going to the next section, a few remarks on Theorem \ref{th:conv-AMSA} are in order:
\begin{itemize}
\item Note that since the sequence of controls $(\btheta^k)$ built by \AMSA is a minimizing sequence, it  need not convergence in $\st$ norm, nor does it necessarily contain a convergent subsequence. To guarantee that the minimizing sequence converges in norm, it is necessary to additional convexity and/or compactness properties.
\item The convergence result is \emph{a priori} in the sense that it cannot be directly implemented for the following reasons:
\begin{itemize}
\item The bounds for $(\e_k)$ and $(\gamma_k)$ involve a constant $C$ which is difficult to estimate in practice.
\item The bounds may be suboptimal due to the construction of the proof.
\item In practice, it is difficult to guarantee that the maximization step is performed within a certain fraction $\gamma_k$ of the actual maximum.
\end{itemize}
\item The main interest of the result lies in the fact that it reveals the necessity to tighten the accuracy in the time integration schemes and the maximization algorithms across the iterations in order to approach the continuous sampled optimal control problem \eqref{eq:ocs}.
\end{itemize}

\section{Generalization error and convergence towards the fully continuous problem}
\label{sec:sampling-err}

The goal of this section is to connect the controls $(\btheta^k)$ of the \AMSA Algorithm \ref{alg:amsa} with the PMP solutions of the fully continuous problem \eqref{eq:oc}. For this, we rely on recent results from \cite{EHL2019} connecting the sampled PMP with the fully continuous PMP which we next briefly recall.

We assume in the following that $\btheta^*$ and $\bvtheta^*_{\cS_N}$ are solutions of \eqref{eq:oc} and \eqref{eq:ocs} such that the Hamiltonian step attains a maximum in the \emph{interior} of $\Theta$. Consequently, the continuous PMP solution $\btheta^*$ satisfies
$$
\F(\btheta^*)_t \coloneqq \bE_\mu \nabla_\theta H(t, u^{\btheta^*}_t, p^{\btheta^*}_t, \theta^*_t ) = 0
$$
for a.e. $t\in [0, T]$ and where $\F:\st \to L^\infty ([0,T], \bR^m)$. Similarly, an interior solution $\bvtheta^*_{\cS_N}$ of the sampled PMP is a random variable which satisfies
$$
\F_{\cS_N}(\bvtheta^*_{\cS_N})_t \coloneqq \frac 1 N \sum_{i=1}^N \nabla_\theta H(t, u^{\bvtheta^*_{\cS_N}, i}_t, p^{\bvtheta^*_{\cS_N}, i}_t, \vartheta_{N,t} ) = 0.
$$

Under these assumptions, the theorem below, proven in \cite{EHL2019}, describes the convergence of an interior solution $\bvtheta^*_{\cS_N}$ of the first order condition of the sampled PMP to an interior solution $\btheta^*$ of the continuous PMP. The additional local strong concavity assumption on the Hessian allows to guarantee that $\bvtheta^*_{\cS_N}$ is a global/local maximum of the sampled PMP. The result also guarantees convergence of loss function values.

\begin{definition}
For $\rho>0$ and $x\in \st$, define $S_\rho(x) \coloneqq \{ y \in \st \, : \, \Vert x-y \Vert \leq \rho \}$. The mapping $\F$ is said to be stable on $S_\rho(x)$ if there exists a constant $K_\rho>0$ such that for all $y, z\in S_\rho(x) $,
$$
\Vert y -z \Vert_{\st} \leq K_\rho \Vert \F(x) - \F(y) \Vert_{L^\infty ([0,T], \bR^m)}.
$$
\end{definition}

\begin{theorem}[Theorem 6, Corollary 1 and 2 from \cite{EHL2019}]
\label{th:impact-sampling}
Let  $\F$ be a mapping  which is stable on $S_\rho(\btheta^*)$ for some $\rho>0$ and $\btheta^*$ a solution of $\F=0$. Then there exists positive constants $s_0, C, K_1, K_2$ and $\rho_1<\rho$ and a random variable $\bvtheta^*_{\cS_N} \in S_{\rho_1}(\btheta^*)$ such that
\begin{equation}
\bP[ \Vert \btheta^* - \bvtheta^*_{\cS_N} \Vert \geq Cs ] \leq 4\exp\left(  - \frac{Ns^2}{K_1+K_2 s} \right),\quad s \in (0, s_0]
\end{equation}
and $\bvtheta^*_{\cS_N} \to \btheta^*$ as $N\to \infty$ in probability. Moreover, there exists constants $K_1', K_2'$ such that
\begin{equation}
\label{eq:bound-mean-field}
\bP[ \big\vert  \cJ(\btheta^*) - \cJ(\bvtheta^*_{\cS_N}) \big\vert \geq s] \leq 4\exp\left(  - \frac{Ns^2}{K_1'+K_2' s} \right),\quad s \in (0, s_0]
\end{equation}
In addition, if for all $t \in [0, T]$, the Hessian $\bE_{\mu}\nabla^2_{\theta, \theta} H(u^{\btheta^*}_t, p^{\btheta^*}_t, \theta^*_t ) + \lambda_0 I$ is negative definite, then $\bvtheta^*_{\cS_N}$ is also a strict local maximum of the sampled Hamiltonian $\vartheta \mapsto \frac 1 N \sum_{i=1}^N H(u_t^{\bvtheta^*_{\cS_N}, i}, p_t^{\bvtheta^*_{\cS_N}, i}, \vartheta )$. In particular, if the sampled Hamiltonian has a unique maximizer, then $\bvtheta^*_{\cS_N}$ is a solution of the sampled PMP \eqref{eq:max-PMP-classic} with the same high probability.
\end{theorem}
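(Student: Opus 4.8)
The statement is quoted from \cite{EHL2019}, and the plan is to reconstruct it as a perturbation argument that couples a deterministic \emph{inverse stability} estimate for the equation $\F=0$ with a \emph{concentration} estimate for the empirical first-order map $\F_{\cS_N}$. The prerequisite is regularity: from (A1)--(A2) and a Grönwall argument, for each fixed sample $(x,y)$ the maps $\btheta\mapsto\bu^{\btheta,x}$ and $\btheta\mapsto\bp^{\btheta,x}$ are Lipschitz from $\st$ into $\sp$ with constants depending only on $T$, $K$, $K'$; consequently $\btheta\mapsto\nabla_\theta H(t,u^{\btheta,x}_t,p^{\btheta,x}_t,\theta_t)$ is bounded and Lipschitz in $\btheta$, uniformly in $t$ and in $(x,y)$ over the relevant bounded sets. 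In particular $\F$, each sample-wise realization of it, and the reduced objective $\cJ$ are all Lipschitz on the ball $S_\rho(\btheta^*)$; the last fact will be used for \eqref{eq:bound-mean-field}.

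\textbf{Concentration of $\F_{\cS_N}$ (the technical core).} Here $\F_{\cS_N}(\btheta)_t=\frac1N\sum_{i=1}^N\nabla_\theta H(t,u^{\btheta,i}_t,p^{\btheta,i}_t,\theta_t)$ is an average of $N$ i.i.d.\ bounded random elements of $L^\infty([0,T],\bR^m)$ with mean $\F(\btheta)$. Using boundedness and the sample-wise Lipschitz dependence on $\btheta$ and on $t$ from the previous step, I would cover $S_\rho(\btheta^*)$ (intersected with the feasible set) by a finite net, cover $[0,T]$ by finitely many nodes, apply a vector-valued Bernstein inequality at each net point and node, and then pass to all $\btheta\in S_\rho(\btheta^*)$ and all $t\in[0,T]$ via the Lipschitz bounds, absorbing the covering numbers into the constants. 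This yields an estimate $\bP[\sup_{\btheta\in S_\rho(\btheta^*)}\Vert\F_{\cS_N}(\btheta)-\F(\btheta)\Vert_{L^\infty}\ge s]\le 4\exp(-Ns^2/(K_1+K_2s))$ for $s\in(0,s_0]$.

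\textbf{Nearby zero and the two probabilistic bounds.} On the event just described with $s$ small, $\F_{\cS_N}$ is a uniformly small perturbation of $\F$ on $S_\rho(\btheta^*)$, and the stability hypothesis is the analytic substitute for invertibility of the linearization of $\F$ at $\btheta^*$: a contraction-mapping / Newton--Kantorovich argument then produces a solution $\bvtheta^*_{\cS_N}\in S_{\rho_1}(\btheta^*)$ of $\F_{\cS_N}=0$ for some $\rho_1<\rho$ (shrinking $s_0$ so the iterates stay in $S_{\rho_1}(\btheta^*)$). Applying the stability inequality with $x=\btheta^*$, $y=\bvtheta^*_{\cS_N}$, $z=\btheta^*$ and using $\F(\btheta^*)=0=\F_{\cS_N}(\bvtheta^*_{\cS_N})$,
\begin{equation}
\Vert\btheta^*-\bvtheta^*_{\cS_N}\Vert_{\st}\le K_\rho\Vert\F(\btheta^*)-\F(\bvtheta^*_{\cS_N})\Vert=K_\rho\Vert\F_{\cS_N}(\bvtheta^*_{\cS_N})-\F(\bvtheta^*_{\cS_N})\Vert\le K_\rho s,
\end{equation}
so the first displayed inequality follows with $C=K_\rho$, and $\bvtheta^*_{\cS_N}\to\btheta^*$ in probability as $N\to\infty$ by letting $s\to0$. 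The loss bound \eqref{eq:bound-mean-field} then follows from Lipschitzness of $\cJ$ on $S_\rho(\btheta^*)$, since $\vert\cJ(\btheta^*)-\cJ(\bvtheta^*_{\cS_N})\vert\le L_\cJ\Vert\btheta^*-\bvtheta^*_{\cS_N}\Vert$ transports the tail bound up to a rescaling of constants (the constant could be sharpened by noting that the gradient of the reduced objective vanishes at $\btheta^*$, but the Lipschitz transfer already gives the stated rate).

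\textbf{Second-order conclusion and main obstacle.} If $\bE_\mu\nabla^2_{\theta,\theta}H(u^{\btheta^*}_t,p^{\btheta^*}_t,\theta^*_t)+\lambda_0 I$ is negative definite for all $t$, I would run a concentration estimate for the empirical Hessian analogous to the one above, combine it with $\Vert\bvtheta^*_{\cS_N}-\btheta^*\Vert\le Cs$ and continuity of the Hessian (from (A1)--(A2)), and conclude that on a high-probability event the sampled Hamiltonian has negative definite Hessian at $\bvtheta^*_{\cS_N}$; hence $\bvtheta^*_{\cS_N}$ is a strict local maximizer, and a global one when the maximizer is unique, so it satisfies \eqref{eq:max-PMP-classic}. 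I expect the main obstacle to be the concentration step: a \emph{uniform-in-$\btheta$ and in $t$} sub-exponential inequality for $\F_{\cS_N}-\F$ into $L^\infty([0,T],\bR^m)$ requires quantitative, data-uniform Lipschitz control of the coupled ODE trajectories $\bu^{\btheta,i},\bp^{\btheta,i}$ in $\btheta$ over the region around $\btheta^*$, plus a chaining/covering argument over the infinite-dimensional control space that does not degrade the Bernstein rate; keeping the fixed-point construction of $\bvtheta^*_{\cS_N}$ inside $S_{\rho_1}(\btheta^*)$ on an explicitly controlled event is the secondary difficulty.
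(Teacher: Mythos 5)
This statement is imported verbatim from \cite{EHL2019}; the paper gives no proof of its own, so your reconstruction can only be measured against the cited work. Your overall architecture --- deterministic stability of $\F$ plus a Bernstein-type concentration bound for the Banach-space-valued empirical mean $\F_{\cS_N}$, combined into a quantitative root-finding argument --- is the right skeleton, and the sub-exponential tail $4\exp(-Ns^2/(K_1+K_2 s))$ does indeed come from the kind of vector-valued Bernstein inequality you invoke (the same family of bounds this paper cites as \cite{PS1986} in the proof of Corollary \ref{cor:final-conv-result}). The transfer of the tail bound to $\cJ$ by Lipschitz continuity and the second-order conclusion via Hessian concentration plus continuity are also consistent with Corollaries 1 and 2 of the cited work.

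However, the technical core of your proposal has a genuine gap that you partly flag but do not resolve, and the route you sketch for it would fail. You propose to obtain $\sup_{\btheta\in S_\rho(\btheta^*)}\Vert\F_{\cS_N}(\btheta)-\F(\btheta)\Vert\le s$ by covering $S_\rho(\btheta^*)$ with a finite net and taking a union bound, ``absorbing the covering numbers into the constants.'' But $S_\rho(\btheta^*)$ is a ball in $\st$, which is not totally bounded: finite $\epsilon$-nets do not exist for small $\epsilon$ (rapidly oscillating controls are pairwise separated in the $L^\infty$ norm), so there is no finite covering number to absorb, and the net-plus-union-bound argument cannot deliver uniform concentration over the ball. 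This uniformity is load-bearing in your argument: your key display bounds $\Vert\F_{\cS_N}(\bvtheta^*_{\cS_N})-\F(\bvtheta^*_{\cS_N})\Vert\le s$ at the \emph{random, sample-dependent} point $\bvtheta^*_{\cS_N}$, which pointwise concentration at deterministic controls does not justify. The proof in \cite{EHL2019} avoids this issue by never requiring uniform concentration over the control ball: concentration is invoked only at deterministically defined points (essentially at $\btheta^*$ and along an explicitly constructed fixed-point sequence), and the distance of the constructed root to $\btheta^*$ is then controlled through the stability constant and deterministic sample-wise Lipschitz estimates rather than through an entropy bound. Until you either adopt such a construction or supply a genuinely different uniformization device, the first probabilistic inequality --- and everything downstream of it --- is not established.
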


\begin{corollary}
\label{cor:final-conv-result}
If the sampled Hamiltonian has a unique maximizer $\bvtheta^*_{\cS_N}$, then there exists constants $\tilde K_1$ and $\tilde K_2$ such that for any $\e>0$, there exists $k(\e) \in \bN$ such that for $k\geq k(\e)$, the control $\btheta^k$ at iteration $k$ of \AMSA satisfies
$$
\bP[ | \cJ_{\cS_N}(\btheta^k) - \cJ(\btheta^*)|\geq 3 \e ] \leq 4\exp\left(  - \frac{N\e^2}{\tilde K_1+ \tilde K_2 \e} \right)
$$
\end{corollary}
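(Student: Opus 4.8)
The plan is to bridge $\cJ_{\cS_N}(\btheta^k)$ and $\cJ(\btheta^*)$ by inserting the sampled optimal value $J^*_{\cS_N}$ and the mean-field value $\cJ(\bvtheta^*_{\cS_N})$, and to bound each of the three resulting legs by $\e$; this is what produces the $3\e$. First, on the high-probability event of Theorem~\ref{th:impact-sampling} on which $\bvtheta^*_{\cS_N}$ solves the sampled PMP---which, under the corollary's uniqueness hypothesis together with the strict local-concavity clause of Theorem~\ref{th:impact-sampling}, makes $\bvtheta^*_{\cS_N}$ the minimizer of the deterministic problem~\eqref{eq:ocs}---one has $J^*_{\cS_N}=\cJ_{\cS_N}(\bvtheta^*_{\cS_N})$. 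Hence, by the triangle inequality,
\begin{align*}
\big| \cJ_{\cS_N}(\btheta^k) - \cJ(\btheta^*) \big|
\leq{}& \big| \cJ_{\cS_N}(\btheta^k) - J^*_{\cS_N} \big|
+ \big| \cJ_{\cS_N}(\bvtheta^*_{\cS_N}) - \cJ(\bvtheta^*_{\cS_N}) \big| \\
&{}+ \big| \cJ(\bvtheta^*_{\cS_N}) - \cJ(\btheta^*) \big| .
\end{align*}

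Two of the three legs are immediate. The first, $\big|\cJ_{\cS_N}(\btheta^k)-J^*_{\cS_N}\big|$, is controlled by Theorem~\ref{th:conv-AMSA}: for a fixed realization of $\cS_N$ the sequence $\cJ_{\cS_N}(\btheta^k)$ decreases to $J^*_{\cS_N}$, so there is $k(\e)\in\bN$ (a priori depending on $\cS_N$) with $0\leq\cJ_{\cS_N}(\btheta^k)-J^*_{\cS_N}\leq\e$ for all $k\geq k(\e)$. The third, $\big|\cJ(\bvtheta^*_{\cS_N})-\cJ(\btheta^*)\big|$, is at most $\e$ with probability at least $1-4\exp\!\big(-N\e^2/(K_1'+K_2'\e)\big)$, $\e\in(0,s_0]$, directly by estimate~\eqref{eq:bound-mean-field}.

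The remaining leg is the deviation of the empirical risk from the true risk at the data-dependent control $\bvtheta^*_{\cS_N}$. Here I would use that $\bvtheta^*_{\cS_N}\in S_{\rho_1}(\btheta^*)$ (again from Theorem~\ref{th:impact-sampling}) and that, by the regularity and boundedness contained in (A1)--(A2) and the boundedness of $\Theta\subseteq[U_{\mathrm{min}},U_{\mathrm{max}}]^m$, the single-sample loss $\btheta\mapsto\loss(x,y,\btheta)$ is uniformly bounded and Lipschitz on the ball $S_{\rho_1}(\btheta^*)$; a Bernstein-type concentration estimate, uniform over $S_{\rho_1}(\btheta^*)$---available from the analysis of \cite{EHL2019} or from standard empirical-process arguments---then gives $\sup_{\btheta\in S_{\rho_1}(\btheta^*)}\big|\cJ_{\cS_N}(\btheta)-\cJ(\btheta)\big|\leq\e$, hence $\big|\cJ_{\cS_N}(\bvtheta^*_{\cS_N})-\cJ(\bvtheta^*_{\cS_N})\big|\leq\e$, with probability at least $1-4\exp\!\big(-N\e^2/(K_1''+K_2''\e)\big)$ on a suitable range of $\e$. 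On the intersection of the favourable events and for $k\geq k(\e)$ the three legs sum to $3\e$; a union bound leaves a failure probability of the form $c\exp(-N\e^2/(K+K'\e))$ with $c$ an absolute constant, which for $N$ large enough---so that the exponent exceeds $\log(c/4)$ on the relevant $\e$-range---can be rewritten, after enlarging the constants, as $4\exp\!\big(-N\e^2/(\tilde K_1+\tilde K_2\e)\big)$, which is the claim.

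The hard part is this last leg: getting an exponential, Bernstein-shaped tail for $\big|\cJ_{\cS_N}(\bvtheta^*_{\cS_N})-\cJ(\bvtheta^*_{\cS_N})\big|$ despite the sample-dependence of $\bvtheta^*_{\cS_N}$, which forces an upgrade from pointwise to uniform concentration over the localizing ball $S_{\rho_1}(\btheta^*)$; this is where the stability of $\F$ near $\btheta^*$ and the Lipschitz bounds (A1)--(A2) are essential, and it is the ingredient borrowed from \cite{EHL2019}. A secondary, purely bookkeeping, difficulty is merging the several $\exp(\cdot)$ tails---and absorbing the $\cS_N$-dependence of $k(\e)$---into the single clean bound with prefactor $4$, which is exactly what forces the requirement that $N$ be sufficiently large.
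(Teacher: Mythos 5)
Your proof is correct and follows the same skeleton as the paper's: the identical three-term decomposition through $\cJ_{\cS_N}(\bvtheta^*_{\cS_N})=J^*_{\cS_N}$ and $\cJ(\bvtheta^*_{\cS_N})$, with the first leg handled by Theorem~\ref{th:conv-AMSA} and the third by~\eqref{eq:bound-mean-field}. The only place you diverge is the middle leg, $\vert \cJ_{\cS_N}(\bvtheta^*_{\cS_N}) - \cJ(\bvtheta^*_{\cS_N})\vert$: the paper disposes of it in one line by citing an infinite-dimensional Hoeffding inequality (\cite[Corollary 2]{PS1986}) applied directly at $\bvtheta^*_{\cS_N}$, whereas you flag that $\bvtheta^*_{\cS_N}$ is sample-dependent and therefore upgrade to a concentration bound that is uniform over the localizing ball $S_{\rho_1}(\btheta^*)$, using the stability of $\F$ and (A1)--(A2). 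This is more careful than the paper's treatment --- a pointwise Hoeffding bound at a fixed control does not, strictly speaking, cover a data-dependent argument, so your uniform-over-the-ball step is exactly the repair needed to make that line rigorous. Likewise, your remarks about absorbing the union-bound prefactor and the failure probability of the event on which $\bvtheta^*_{\cS_N}$ solves the sampled PMP into the final constants $\tilde K_1, \tilde K_2$ address bookkeeping the paper passes over silently. In short: same route, with one step done more honestly.
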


\begin{proof}
We use that
\begin{align}
\bP[ \vert \cJ_{\cS_N}(\btheta^k) - \cJ(\btheta^*) \vert \geq 3\e ]
&\leq 
\bP[ \vert \cJ_{\cS_N}(\btheta^k) - \cJ_{\cS_N}(\bvtheta^*_{\cS_N}) \vert \geq \e]
+ \bP[ \vert \cJ_{\cS_N}(\bvtheta^*_{\cS_N}) - \cJ(\bvtheta^*_{\cS_N}) \vert \geq \e] \\
&+ \bP[\vert \cJ(\bvtheta^*_{\cS_N}) - \cJ(\btheta^*) \vert \geq \e],
\end{align}
and then bound each term as explained next. Let $\e>0$ be fixed. With high probability, $\bvtheta^*_{\cS_N}$ is a solution to the sampled PMP \eqref{eq:ocs}, that is $\cJ_{\cS_N}(\bvtheta^*_{\cS_N})=J^*_{\cS_N}$. By Theorem \ref{th:conv-AMSA}, there exists $k(\e) \in \bN$ such that for $k\geq k(\e)$, the control $\btheta^k$ at iteration $k$ of \AMSA satisfies $\vert \cJ_{\cS_N}(\btheta^k) - \cJ_{\cS_N}(\bvtheta^*_{\cS_N}) \vert \leq \e$, which bounds the first term. For the second term, by the infinite dimensional Hoeffding's inequality (see \cite[Corollary 2]{PS1986}), $ \vert \cJ_{\cS_N}(\bvtheta^*_{\cS_N}) - \cJ(\bvtheta^*_{\cS_N}) \vert \leq \e$ with probability $1-\exp(-N\e^2 / (\tilde K_1' +\tilde K_2' \e))$. The third term is also bounded by $\e$ with probability $4\exp\left(  - \frac{N\e^2}{K_1'+K_2' \e} \right)$ thanks to \eqref{eq:bound-mean-field}.
\end{proof}

\section{Guidelines for numerical implementation}
\label{sec:impl}
Algorithm \ref{alg:amsa} should in principle be implemented with tolerances $\e_k$ and $\gamma_k$ given in Theorem \ref{sec:conv-AMSA} to ensure convergence. These tolerances are in practice hard to guarantee, and the convergence analysis that leads to these quantities may be suboptimal. A trade-off between the theory and numerical implementation consists simply in replacing the forward and backward propagations at prescribed accuracies by propagations where the time steps are progressively refined across the iterations. It is therefore necessary to prescribe a refinement strategy. For the Hamiltonian maximization step (line 13 of Algorithm \ref{alg:amsa}), the best that one can do is to use an efficient optimizer of nonconvex problems.

In addition to these considerations, note that Algorithm \ref{alg:amsa} can be parallelized at several parts: for each sample $i$, the forward and backward propagations can be performed in parallel (see the for loop of line 8). Each propagation can additionally be parallelized by means of parallel in time algorithms as in \cite{GRSCG2020} (see \cite{LMT2001, MM2020} for some selected references on the topic). In addition, the Hamiltonian maximization step can also be parallelized (see \texttt{for} loop of line 12 of Algorithm \ref{alg:amsa}).
For our concern, we have chosen to parallelize the Maximization step, since it is way more expensive in terms of computational time than the propagation step.

\section{Numerical experiments}
\label{sec:numerics}
In this section, we present some numerical experiments, aiming primarily at illustrating the behavior of the \AMSA algorithm. We give special focus on studying the performance in the learning phase in terms of the value of the loss function.
We also discuss computing times and examine expressivity in terms of generalization errors.
Note that the final quality of approximation depends on the number of samples, on the network depth, and on the optimizers and their starting guesses.
We proceed by increasing levels of difficulty in the numerical examples in order to disentangle the effect of each of the different aspects (note however that their effects are usually mixed in non-trivial applications).
The code to reproduce the results can be found at 
\begin{center}
\url{https://github.com/jaghili/amsa}.
\end{center}
The implementation has been done with Python 3 and NumPy. Particular attention has been paid to vectorize most of the operations in order to delegate as much as possible the looping to internal, highly optimized C and Fortran functions. We have observed that the time to perform the ODE propagations is negligible with respect to the Hamiltonian maximization (line 12 of Algorithm \ref{alg:amsa}). We have thus parallelized this step with MPI.

%
%
\newcommand{\boxplot}[3]{
  \begin{subfigure}[b]{0.31\textwidth}
    \centering
    \includegraphics[width=\textwidth]{data/min_k_#1_#2_N_#3.png}
    \caption{$\min_{1\leq k \leq k_{\max}} \cJ^{\text{(run=r)}}_{\cS_N}(\btheta^k)$.}
    \label{fig:min_k_#1_#2_N_#3}
  \end{subfigure}
}

\newcommand{\boxplottest}[3]{
  \begin{subfigure}[b]{0.31\textwidth}
    \centering
    \includegraphics[width=\textwidth]{data/min_k_#1_#2_N_#3.png}
    \caption{$\min_{1\leq k \leq k_{\max}} \cJ^{\text{(run=r)}}_{\widetilde\cS_N}(\btheta^k)$.}
    \label{fig:min_k_#1_#2_N_#3}
  \end{subfigure}
}

\newcommand{\lossplot}[3]{
  \begin{subfigure}[b]{0.31\textwidth}
    \centering
    \includegraphics[width=\textwidth]{data/#1_#2_#3.png}
    \caption{#3}
    \label{fig:#1_#2_#3}
  \end{subfigure}
}

\newcommand{\legendwidth}{250pt}

%
%

\subsection{Approximation of the sine function}
In this example, inspired from \cite[Section 6]{LCTE2018}, the task is to approximate the graph of the sine function,
$$
y: X=[-\pi,\pi] \to Y = [-1,1], \qquad
x \mapsto y(x) = \sin (x).
$$
Note that here the dimensions $n$ and $k$ of the domain and label sets are $n=k=1$. The continuous learning problem (see \eqref{eq:oc}) is to minimize the $L^2(X)$ approximation error over the controls $\btheta\in\st$, namely
$$
\inf_{\btheta\in\st} \frac 1 2 \int_X | y(x) - g(u_T^{\btheta, x}) |^2 \,\dx.
$$
For every point $x\in X$, $u_T^{\btheta, x}$ is the final time state of the ODE
\begin{equation}
\begin{cases}
\dot \u^{\btheta,x}_t &= \tanh(A_t \cdot u_t^{\btheta,x} + b_t), \quad \forall t\in (0,T] \\
 u_0^{\btheta, x} &= x (1,\dots, 1)^T \in \bR^d.
\end{cases}
\end{equation}
Here, $\theta_t = (A_t, b_t)\in \bR^{d\times d}\times \bR^d \sim \bR^{d^2+d}$ are the parameters to optimize. We constraint them to lie in $[\alpha_{\min}, \alpha_{\max}]=[-1, 1]$ so $\Theta=[-1, 1]^{d^2+d}$. The function $g$ is defined as
$$
g: \bR^d \to \bR,\quad z = (z_1,\dots, z_d) \mapsto g(z) \coloneqq \frac 1 d \sum_{i=1}^d z_i,
$$
and note that we have chosen $\mu= \dx$ as the Lebesgue measure, $f =\tanh$ as the activation function, $\Phi(u_T^{\btheta, x}, y) = \tfrac{1}{2}| y(x) - g(u_T^{\btheta, x}) |^2 $ and we do not have any regularisation ($R=0$).

In the sampled version which we consider in our experiments, we work with a set $\cS_N = \{(x_i, y_i)\}_{i=1}^N$ of pairs generated with equidistant data points
$$
x_i = -\pi + 2\pi  \frac{i-1}{N-1} 
, \quad y_i = \sin (x_i),\quad i\in \{1,\dots, N\},
$$
and perform empirical risk minimization taking the uniform distribution $\mu_N = \frac 1 N \sum_{i=1}^N \delta_{(x_i, y_i)} $ (see \eqref{eq:ocs}). We solve this problem with the above described \AMSA algorithm where the final time is set to $T=5$, the penalization parameter is set to $\rho=5$, and the number of neurons per layer to $d=3$.
We use a basic explicit Euler scheme as a time integrator, which induces a ResNet architecture.
We progressively refine the time discretization from $L=3$ to $L=32$ time steps (in other words, we increase the network depth from 3 to 32 layers) using three different strategies:
\begin{itemize}
\item[(A1)] \emph{Abrupt refinement}: We refine from shallow ($L=3$) to deep ($L=32$) at iteration $k=250$.
\item[(A2)] \emph{Fast refinement}: We add $10$ layers every $50$ iterations.
\item[(A3)] \emph{Slow refinement}: We add $10$ layers every $100$ iterations.
\end{itemize}
Our goal is to compare the behavior of these refinement strategies with the non-adaptive training of a shallow and a deep network having $L=3$ and $L=32$ layers respectively.

At each iteration $k\geq0$ of \AMSA, the maximization step is performed with NumPy's L-BFGS-B optimizer, which is an iterative algorithm suited for nonconvex optimization problems.
The optimizer is initialized by picking the best candidate among a list $\Theta^{0}$ of vectors, including the last best control $\btheta^{k}_{\text{best}}=\argmin_{0\le i < k} \cJ_{\cS_N}(\btheta^i)$ among all previous iterations before $k$ and random perturbations of it, precisely
$$
\Theta^{0} \coloneqq \bigcup_{q=0}^5 \bigcup_{i=1}^{25} \left\{ \btheta^k_{\text{best}} + 10^{-2q}\mathtt{rand}(\alpha_{\min},\alpha_{\max}), 10^{-2q}\mathtt{rand}(\alpha_{\textrm{min}}, \alpha_{\textrm{max}}) \right\}
$$
where $\mathtt{rand}(\alpha_{\min},\alpha_{\max})$ is a uniform real distribution function in the interval $[\alpha_{\min}, \alpha_{\max}]$.
As a result, the final output of the algorithm is random due to the choice of the initial guess.
For this reason, we make $\bar r = 20$ runs of \AMSA, each one consisting in  $k_{\max}=800$ iterations.


In Figure \ref{fig:loss_Sine}, we first fix $N=20$ samples and we plot the convergence history of the train loss $\cJ^{\text{(run=r)}}_{\cS_N}(\btheta^k)$ (see red colors). We also plot the generalization error via the \textit{test loss} across the iterations $k$ (see blue colors). The test loss is the quantity $\cJ_{\widetilde\cS_N}(\btheta^k)$, defined in \eqref{eq:ocs}, where we use a test set $\widetilde\cS_N$ different from the training set $\cS_N$. $\widetilde\cS_N$ is taken uniformly random at each iteration $1\le k \le \kmax$.
Since the history depends on each run $r$, the figure shows some statistics related to the repetitions. The continuous red curve shows the average value $\cJ^{(\text{av})}_{\cS_N, k} \coloneqq  (1/\bar r)\sum_{r=1}^{\bar r} \cJ^{\text{(run=r)}}_{\cS_N}(\btheta^k)$ over the runs and the red diffuse color shows its distribution around the average. The same presentation is given in blue for the generalization errors. We can first observe that the convergence history is, in average, relatively similar for all architectures. The average value of the loss function does not go below $10^{-2}$ for the shallow network (figure \ref{fig:loss_Sine_Shallow}), and the value reached by the deep network is only slightly better (figure \ref{fig:loss_Sine_Deep}): at the end of the iterations, the cost function is in average below $7.10^{-2}$. Similar values are reached by any of our three adaptive strategies (see figures \ref{fig:loss_Sine_A1}, \ref{fig:loss_Sine_A2} and \ref{fig:loss_Sine_A3}). As a consequence, if we take this average value as the performance indicator, we may conclude that the approximation power of the deep network is only marginally better than the shallow one. We may also conclude that the adaptive training strategy reduces to some extend the computational time for training (as we illustrate further on in section \ref{sec:runtimes}) but it does not bring extra approximation power. However, note that this indicator is not entirely appropriate because it does not correspond to any realized convergence history and, more importantly, because it does not inform about the best performance that we have at hand. It is crucial to remark that the variance around the average convergence value is significantly larger in the adaptive strategies compared to the non adaptive ones. This is particularly true for (A2) and (A3). Figure \ref{fig:min_k_trainloss_Sine_N_20} shows that the minimal value $\min_{1\leq k \leq k_{\max}} \cJ^{\text{(run=r)}}_{\cS_N}(\btheta^k)$ reached during the $k_{\max}=800$ iterations of each run is, in average, one order of magnitude lower than the one of the non-adaptive coarse and the non-adaptive deep network. This result illustrates that the adaptive strategy significantly contributes to reach better minima, thus allowing to benefit better from the higher approximation power of the deeper neural network.

Figure \ref{fig:pred_Sine} shows the reconstruction of the sine function with the controls performing best in each training run, namely, realizing $\min_{1\leq k \leq k_{\max}} \cJ^{\text{(run=r)}}_{\cS_N}(\btheta^k)$ for every $r$. We see that the best over all realizations yields a very satisfactory approximation. Statistics on the generalization errors of these best controls are given in Figure \ref{fig:min_k_testloss_Sine_N_20}

\begin{figure}
  \centering
  \includegraphics[width=\legendwidth]{./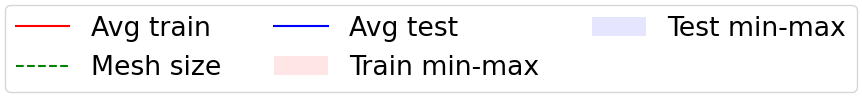}
  \\  
\lossplot{loss}{Sine}{Shallow}
\lossplot{loss}{Sine}{Deep}
\boxplot{trainloss}{Sine}{20}
\\
\lossplot{loss}{Sine}{A1}
\lossplot{loss}{Sine}{A2}
\lossplot{loss}{Sine}{A3}
\caption{Sine function. Training loss $\cJ^{\text{(run=r)}}_{\cS_N}(\btheta_k)$ (red) and test loss $\cJ^{\text{(run=r)}}_{\widetilde\cS_N}(\btheta_k)$ (blue). Here $N=20$ and $\bar{r}=20$ runs.}
\label{fig:loss_Sine}
\end{figure}

\begin{figure}
  \centering
      \includegraphics[width=\legendwidth]{./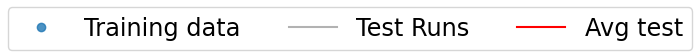}
    \\  
\lossplot{pred}{Sine}{Shallow}
\lossplot{pred}{Sine}{Deep}
\boxplottest{testloss}{Sine}{20}
\\
\lossplot{pred}{Sine}{A1}
\lossplot{pred}{Sine}{A2}
\lossplot{pred}{Sine}{A3}
\caption{Sine function: Predictions of $\bar r=20$ runs and average prediction.}
\label{fig:pred_Sine}
\end{figure}

We can next examine in Figure \ref{fig:sinus} the impact of the number $N$ of data samples in the generalization errors of the best controls. As one can expect, the generalization errors decreases when the number $N$ of samples increases. For $N\geq 20$, the errors with the shallow network stagnate before reaching $10^{-3}$ in average regardless of the number of samples. This illustrates the limitations in the approximation power of the shallow network in the current example. The non-adaptive deep network ($L=32$) presents a behavior which is only better for a reduced number of samples $N\leq 8$ but overall its performance is relatively similar to the coarse neural network. This comes from the difficulty in finding good quality optimizers in networks involving many coefficients. If we now adaptively train the deep network with strategy (A2), the generalization errors are in general lower than the ones reached by the non-adaptive strategies, especially when $N\geq 15$. The result illustrates that the adaptive strategy gives in average a better performance in terms of approximation. This is obtained at a reduced computing time as we show further on.

\newcommand{\figureheight}{120pt}
\begin{figure}
  \centering
 \begin{subfigure}[b]{0.31\textwidth}
   \includegraphics[width=\textwidth]{./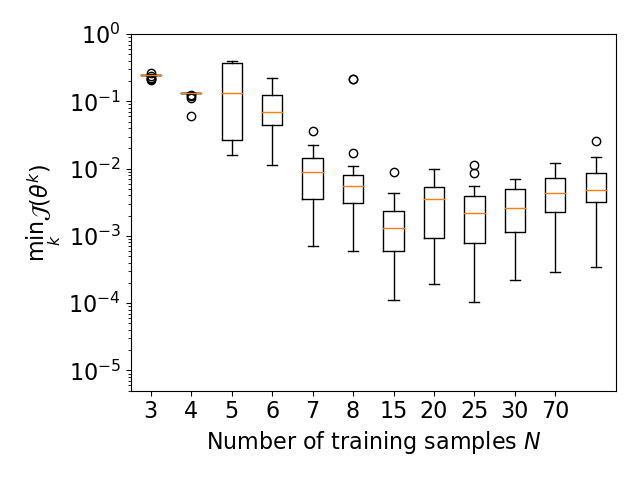}
   \caption{Shallow: $3$ layers}
   \label{fig:sinus.coarse}
 \end{subfigure}
 \begin{subfigure}[b]{0.31\textwidth}
   \includegraphics[width=\textwidth]{./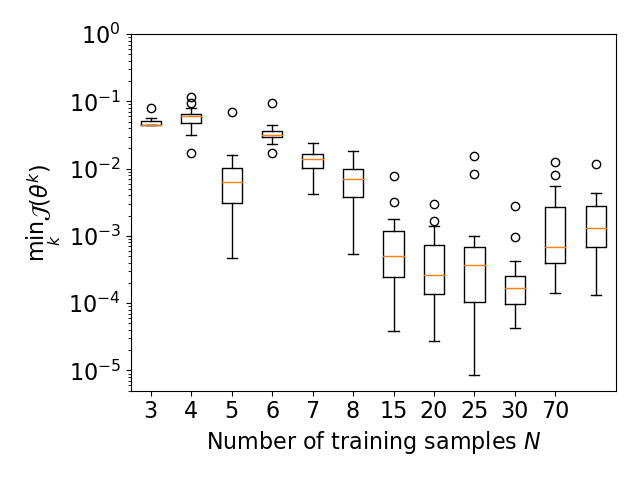}
   \caption{A2: $3\to 32$ layers.}
   \label{fig:sinus.adapt}
 \end{subfigure}
 \begin{subfigure}[b]{0.31\textwidth}
   \includegraphics[width=\textwidth]{./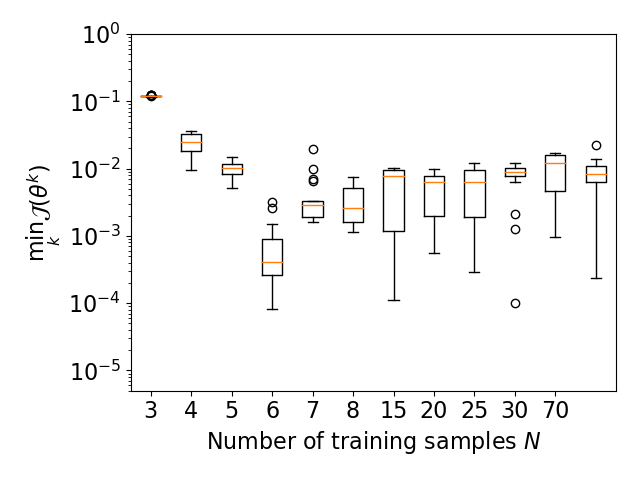}
   \caption{Deep: $32$ layers}
   \label{fig:sinus.thin}
 \end{subfigure}
  \caption{Sine case -- Generalization errors vs $N$ for coarse, adaptive (A2) and thin networks}
  \label{fig:sinus}
\end{figure}

\subsection{Noisy step function}
We next examine the robustness of the method against noise.We consider the graph of the step function,
$$
y: X=[-1,1] \to Y = [-0.5,0.5], \qquad x \mapsto y(x) = 
\begin{cases}
0.5 &\quad \text{if } x\leq 0 \\
-0.5 &\quad \text{if } x> 0 \\
\end{cases}
$$
and we seek to approximate it when the given data are noisy. For each sample $i$, we get the pair $(x_i, y_i)$ with
$$
y_i = y(x_i) + \varepsilon_i,\quad i=1,\dots,N,
$$
and the $\varepsilon_i$ follow a uniform distribution in $[-0.2, 0.2]$.

We consider the same setting as before ($T=5,\,\rho=5$, $d=3$) and fix $N=800$. Figure \ref{fig:loss_Step} shows the convergence history of the training with the shallow, deep and adaptively deep neural networks. We consider the same adaptive strategies as before. All approaches reach more or less the same minimal value in the training (see Figure \ref{fig:min_k_trainloss_Step_N_20}. However, we observe that (A2) and (A3) give better generalization errors than  the non-adaptive deep architecture (see Figure \ref{fig:min_k_testloss_Step_N_20}). Note however that this superiority is not very large and all methods produce excellent reconstruction results as Figure \ref{fig:pred_Step} illustrates. We think that this is due to the fact that in this case all methods find configurations that are very close the global optimum, which is here given by the approximation error of the mapping $x\mapsto y(x)$.

\begin{figure}
  \centering
    \includegraphics[width=\legendwidth]{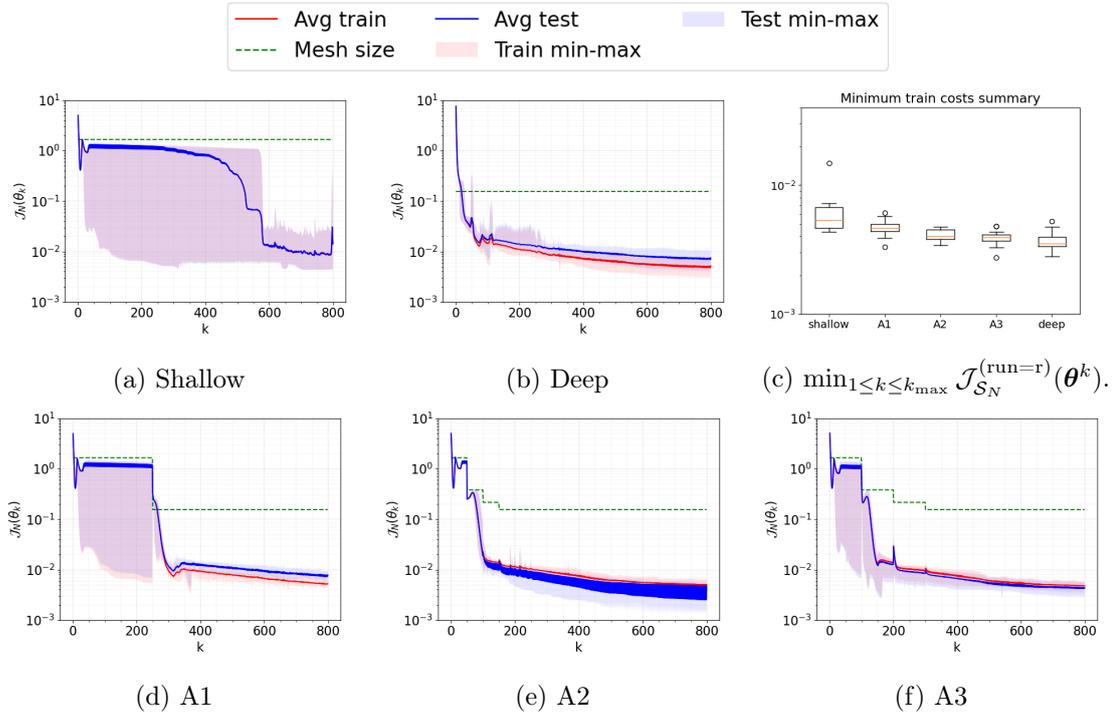}
    \\  
\lossplot{loss}{Step}{Shallow}
\lossplot{loss}{Step}{Deep}
\boxplot{trainloss}{Step}{20}
\\
\lossplot{loss}{Step}{A1}
\lossplot{loss}{Step}{A2}
\lossplot{loss}{Step}{A3}
\caption{Noisy step: Loss $\cJ^{\text{(run=r)}}_N(\btheta_k)$ for $N=800$ and $\bar{r}=20$ runs.}
\label{fig:loss_Step}
\end{figure}

\begin{figure}
  \centering
    \includegraphics[width=\legendwidth]{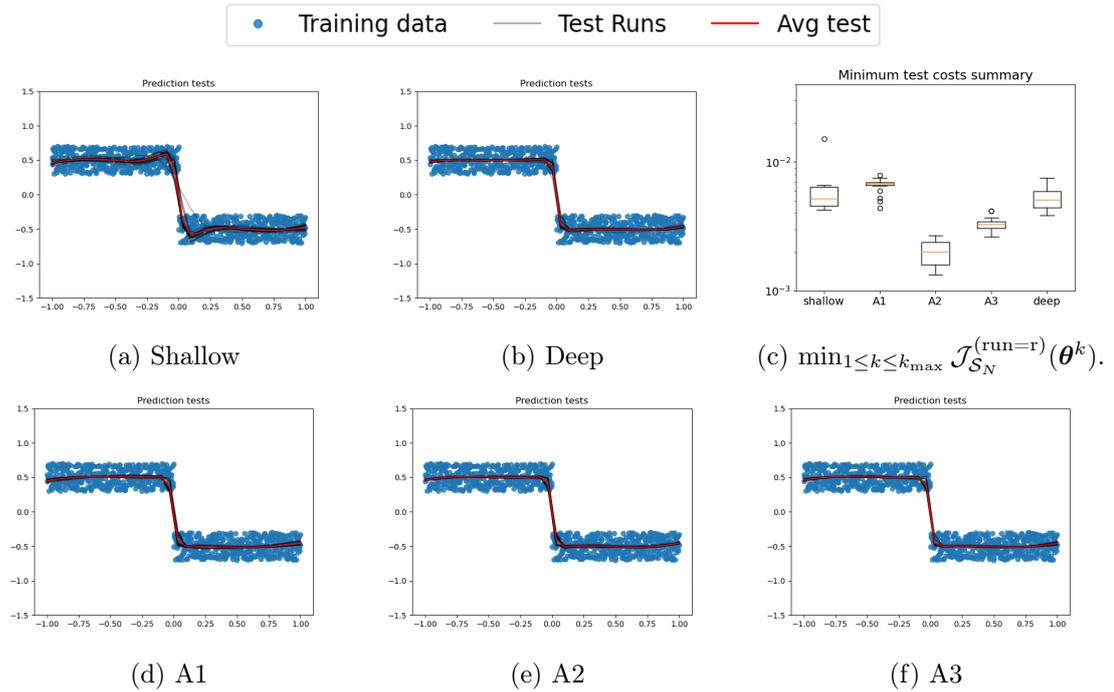}
    \\  
\lossplot{pred}{Step}{Shallow}
\lossplot{pred}{Step}{Deep}
\boxplot{testloss}{Step}{20}
\\
\lossplot{pred}{Step}{A1}
\lossplot{pred}{Step}{A2}
\lossplot{pred}{Step}{A3}
\caption{Noisy step: Predictions of some runs.}
\label{fig:pred_Step}
\end{figure}

\subsection{Classification}
\label{sec:classif}
As a last example, we consider a simple 2D classification problem where the function to approximate is
$$
y: X=[-1,1]^2 \to Y = \{0,1\}, \qquad (x_1,x_2) \mapsto y(x_1,x_2) = 
\begin{cases}
1 &\quad \text{if } x_1^2+x_2^2 \leq (0.5)^2, \\
0 &\quad \text{otherwise.}
\end{cases}
$$
The setting is the same as in the above examples but with slight changes.
Each layer has now $d=2\times 3=6$ neurons per layers, the constraint on the controls is set to $[\alpha_{\min}, \alpha_{\max}]=[-2,2]$ and  the output function $g$ is defined as
$$
g: \bR^d \to \bR,\quad z = (z_1,\dots, z_d) \mapsto g(z) \coloneqq  \mathcal{H}\left(\frac 1 d \sum_{i=1}^d z_i\right)
$$
where $\mathcal{H}$ is the usual Heaviside step function.
The figure layout of the results is the same as in the other examples.
Figures \ref{fig:loss_Classif} and \ref{fig:pred_Classif} show statistics on the loss function and examples of reconstruction.
The blue-to-red scalar field in Fig \ref{fig:pred_Step} is the output of the trained network composed with a treshold filter $\mathcal{F}:x \mapsto \mathcal{H}(x-0.5)$ from $1024$ inputs points.
The black crosses ($\times$) and white dots ($\circ$) are the $N=800$ training points used for each case, the first refers to points with $z=0$, the second to $z=1$.
Essentially, we observe that \AMSA yields the same quality of approximation as the non-adaptive deep neural network.
{
\begin{figure}[H]
  \centering
    \includegraphics[width=\legendwidth]{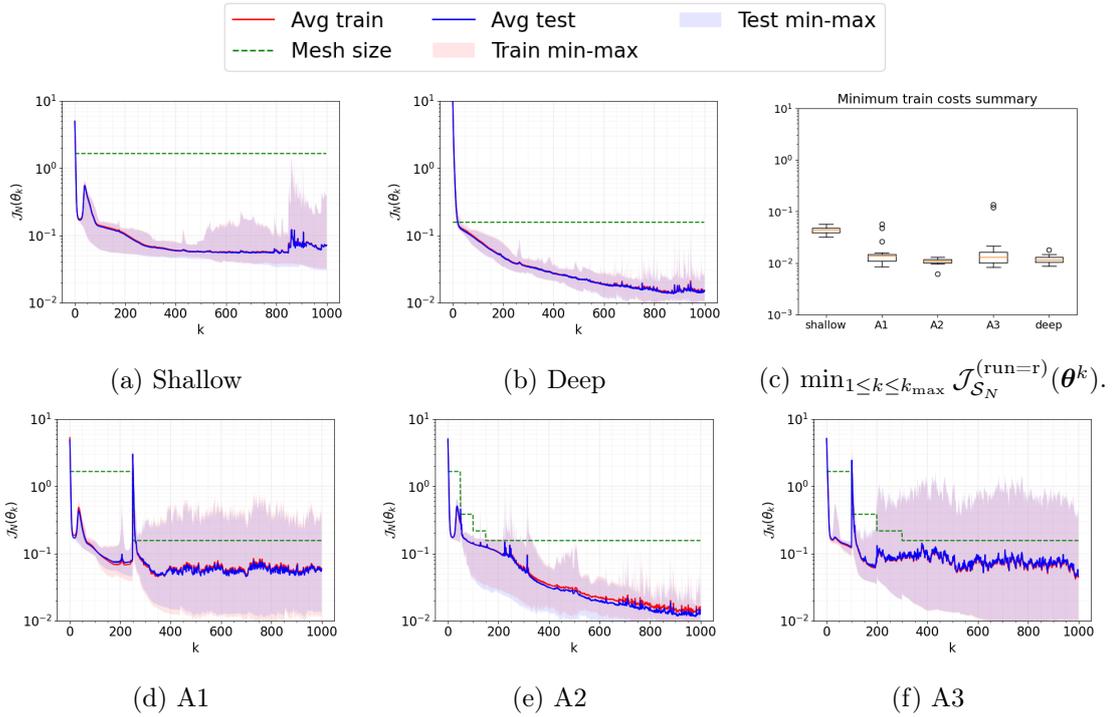}
    \\  
    \lossplot{loss}{Classif}{Shallow}
    \lossplot{loss}{Classif}{Deep}
    \boxplot{trainloss}{Classif}{800}
    \\
    \lossplot{loss}{Classif}{A1}
    \lossplot{loss}{Classif}{A2}
    \lossplot{loss}{Classif}{A3}
\caption{Classification: Loss $\cJ^{\text{(run=r)}}_N(\btheta_k)$ for $N=800$ and $20$ runs.}
\label{fig:loss_Classif}
\end{figure}

\begin{figure}
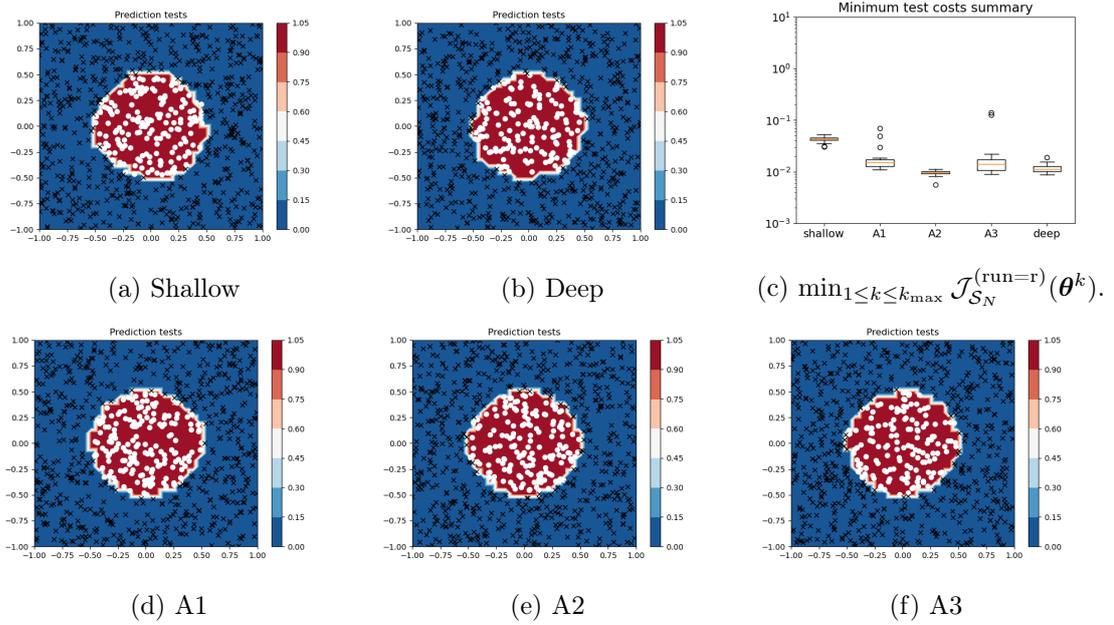

  \centering
\lossplot{pred}{Classif}{Shallow}
\lossplot{pred}{Classif}{Deep}
\boxplot{testloss}{Classif}{800}
\\
\lossplot{pred}{Classif}{A1}
\lossplot{pred}{Classif}{A2}
\lossplot{pred}{Classif}{A3}
\caption{Classification: Predictions of $\bar r=20$ runs and average prediction. Data labels: $\boxed{\times: 0}$, $\boxed{\circ: 1}$.}
\label{fig:pred_Classif}
\end{figure}
}

\subsection{Computational times}
\label{sec:runtimes}
We finish the section on numerical tests by examining the computational time in the learning approach. We present runtimes only for the noisy step example for the sake of brevity (similar results are obtained for the other tests). We consider three different criteria:
\begin{itemize}
\item \textbf{Complexity index (see Figure \ref{fig:runtime_Step_Sequential}):} we estimate the number of operations by measuring the runtime of a sequential run.
\item \textbf{Parallel runtime index (see Figure \ref{fig:runtime_Step_Parallel}):} we measure the runtime of running in parallel the Hamiltonian maximization step of the learning algorithm. We place ourselves in a scenario with no constraints in the computing ressources and allocate one processor per layer. Note that the number of layers is increased in the adaptive approach so we use more and more ressources as the algorithm makes refinements.
\item \textbf{Energy consumption index (see Figure \ref{fig:runtime_Step_Sequential}):} we estimate the total energy consumption of a parallel run as follows. For each iteration of the algorithm, we add the corresponding computing times of all the processors involved in that iteration. This gives an estimate in \texttt{cpu.seconds} of the energy consumption for each iteration. By summing over all iterations, we obtain the final estimate. This quantity is in fact equal to the cumulative sequential runtime which is given on Figure \ref{fig:runtime_Step_Sequential}.
\end{itemize}
Our runtimes have been measured on a cluster and we have observed that its occupation greatly impacts on the runtime results. To illustrate this issue, we show runtimes for the training of the shallow neural network in an empty node and a busy node, i.e. a node where others jobs are running in parallel. We have run the rest of the examples on empty nodes. 

If we first consider Figure \ref{fig:runtime_Step_Sequential}, we see that the adaptive training strategy performs better than the non-adaptive one regarding the sequential runtimes and the energy consumption. This is comes as no surprise since the adaptive strategy performs less operations at the beginning of the iterations since they involve less layers. When computations are run in parallel, Figure \ref{fig:runtime_Step_Parallel}) shows that the run times are all very similar for all strategies. This is due to the fact that the tasks are well-balanced between processors since each of them does computations for one layer. We also observe that runtimes are greatly affected by the occupation of the cluster.

\begin{figure}
\centering
\begin{subfigure}[b]{0.45\textwidth}
    \centering
    \includegraphics[width=\textwidth]{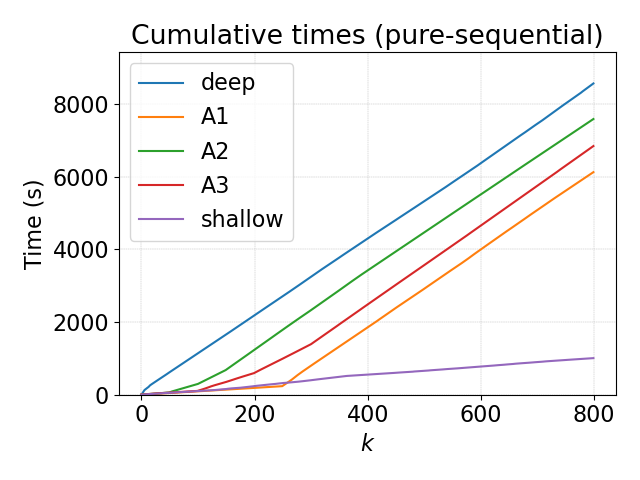}
    \caption{Complexity and energy consumption.}
    \label{fig:runtime_Step_Sequential}
\end{subfigure}
\begin{subfigure}[b]{0.45\textwidth}
    \centering
    \includegraphics[width=\textwidth]{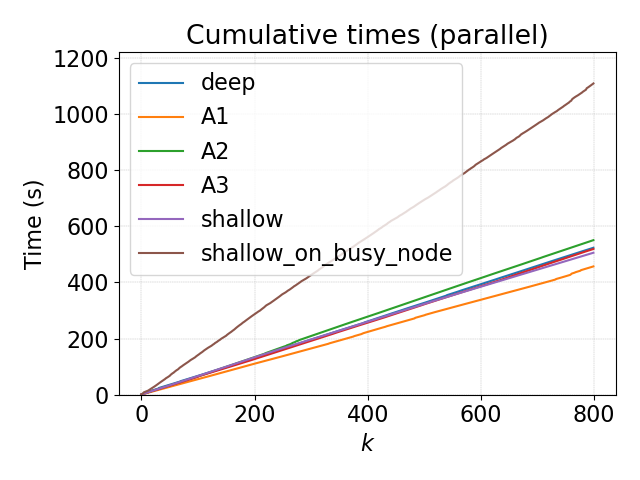}
    \caption{Parallel runtime.}
    \label{fig:runtime_Step_Parallel}
\end{subfigure}
\caption{Computational times for the noisy step example.}
    \label{fig:runtime_Step}
\end{figure}

\section{Conclusion}
\label{sec:conclusion}
We have proven that the \AMSA algorithm converges to an underlying continuous learning problem in the limit of the time discretization and of the the number of samples. The convergence analysis requires that the time propagations and Hamiltonian maximizations are performed at each step with increasingly tight accuracy tolerances. Since a sharp estimation of these tolerances is difficult to obtain, we have implemented \AMSA with certain refinement strategies in order to examine its potential. The numerical experiments reveal that the adaptive strategy helps to benefit in practice from the higher approximation properties of deep networks by mitigating over-parametrization issues: our results show that adaptivity increases the chances to find better quality minimizers compared to the non-adaptive training of deep neural networks. In addition, it appears that the adaptive strategy is clearly more performant in terms of complexity and energy consumption compared to the non-adaptive training of a deep neural network.

\section*{Acknowledgments and Disclosure of Funding}
This research has been funded by the Emergences Project of the Paris city council called ``Models and Measures''.

\bibliography{literature}

\appendix
\section{Proof of Lemma \ref{lem:diffLoss}}
\label{app:proof-prop}
The proof of Lemma \ref{lem:diffLoss} makes use of certain bounds which we gather in the following Proposition \ref{prop:gronwall-evol}. They are derived by an immediate extension of the proofs given in \cite{LCTE2018} so we just sketch the proof for self-completness of the current work.

\begin{proposition}
\label{prop:gronwall-evol}
We have the following bounds for all $t\in [0,T]$,
\begin{align}
\Vert p^{\btheta, \zeta}_t \Vert &\leq K' \coloneqq K e^{\Vert \Pi_\zeta \Vert K T}  \label{eq:Kprime}\\
\Vert \delta u_t \Vert 
&\leq
K_{\delta u} \coloneqq
\left( \sqrt{T} (\eta + \zeta) + \int_0^T \Vert f(s, u^{\btheta, \zeta}_s, \varphi_s) -f(s, u^{\btheta, \zeta}_s, \theta_s) \Vert \ds \right)e^{KT}\label{eq:Kz}\\
\Vert \delta p_t \Vert &\leq K_{\delta p} \coloneqq \sqrt{T}e^{2KT} (1+K(1+K'T)) (\eta+\zeta) \\
&\qquad+ e^{2KT}K(1+K'T)  \int_0^T \Vert f(s, u^{\btheta, \zeta}_s, \varphi_s) -f(s, u^{\btheta, \zeta}_s, \theta_s) \Vert \ds \\
&\qquad+ e^{KT}  \int_0^T \Vert  \nabla_u H(s, u^{\btheta, \zeta}_s, p^{\btheta, \zeta}_s, \varphi_s) -  \nabla_u H(s, u^{\btheta, \eta}_s, p^{\btheta, \eta}_s, \theta_s)\Vert \ds \label{eq:Kp}
\end{align}
\end{proposition}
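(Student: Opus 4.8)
All three estimates are Gr\"onwall arguments applied to the projected forward and backward equations satisfied by the routines \fwd and \bwd, and they are perturbations of the bounds of \cite{LCTE2018}: the genuinely new feature is the presence of the projectors $\Pi_\zeta,\Pi_\eta$, whose residuals are controlled in $L^2([0,T],\bR^n)$ by $\zeta$, resp.\ $\eta$, thanks to \eqref{eq:err-pi-f}, and which are turned into bounds on $\int_0^t\Vert\cdot\Vert\,\ds$ at the price of a factor $\sqrt T$ by Cauchy--Schwarz. Throughout I would use that $\nabla_u H(t,u,p,\theta)=\big(\nabla_u f(u,\theta)\big)^\top p$; that assumption (A2) gives both $\Vert\nabla_u f(u,\theta)\Vert_2\le K$ and the $K$-Lipschitz continuity of $u\mapsto\nabla_u f(u,\theta)$; and that (A1) makes $\nabla_u\Phi$ bounded by $K$ and $K$-Lipschitz. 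First, for \eqref{eq:Kprime}: the approximate co-state obeys $\dot p^{\btheta,\zeta}_t=-\Pi_\zeta\big[(\nabla_u f(u^{\btheta,\zeta}_t,\theta_t))^\top p^{\btheta,\zeta}_t\big]$ with $\Vert p^{\btheta,\zeta}_T\Vert\le K$; integrating backward from $T$ and using $\Vert\nabla_u f\Vert_2\le K$ gives $\Vert p^{\btheta,\zeta}_t\Vert\le K+\Vert\Pi_\zeta\Vert K\int_t^T\Vert p^{\btheta,\zeta}_s\Vert\,\ds$, and Gr\"onwall yields $\Vert p^{\btheta,\zeta}_t\Vert\le Ke^{\Vert\Pi_\zeta\Vert K(T-t)}\le K'$. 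The identical argument (with $\Vert\Pi_\eta\Vert$, which is controlled by the routine, and $\eta\le1$) bounds $\Vert p^{\bphi,\eta}_t\Vert$ by a constant of the same form; this is used below.

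For \eqref{eq:Kz}, set $\delta u_t\coloneqq u^{\bphi,x,\eta}_t-u^{\btheta,x,\zeta}_t$, so $\delta u_0=0$. Subtracting the two projected forward equations and inserting $\pm f(u^{\bphi,\eta}_t,\varphi_t)\pm f(u^{\btheta,\zeta}_t,\varphi_t)$, one writes $\dot{\delta u}_t$ as the sum of: the two projector residuals $(\Pi_\eta-\id)f(u^{\bphi,\eta}_t,\varphi_t)$ and $(\id-\Pi_\zeta)f(u^{\btheta,\zeta}_t,\theta_t)$, whose $L^1(0,t)$-norms are $\le\sqrt T\,\eta$ and $\le\sqrt T\,\zeta$; the state-difference term, bounded pointwise by $K\Vert\delta u_t\Vert$; and the control-mismatch term $\Vert f(u^{\btheta,\zeta}_t,\varphi_t)-f(u^{\btheta,\zeta}_t,\theta_t)\Vert$, which is left explicit. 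Integrating from $0$ and applying Gr\"onwall with $\delta u_0=0$ gives precisely $K_{\delta u}$.

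For \eqref{eq:Kp}, set $\delta p_t\coloneqq p^{\bphi,x,\eta}_t-p^{\btheta,x,\zeta}_t$; by Lipschitzness of $\nabla_u\Phi$ the terminal gap satisfies $\Vert\delta p_T\Vert\le K\Vert\delta u_T\Vert\le KK_{\delta u}$. Subtracting the two projected backward equations, the projector residuals again contribute terms of size $\sqrt T(\eta+\zeta)$, and the remaining difference of Hamiltonian gradients is split, via $\nabla_u H=(\nabla_u f)^\top p$, into (a) a part proportional to $\delta p_t$, bounded by $K\Vert\delta p_t\Vert$ (the Gr\"onwall term); (b) a part proportional to $\delta u_t$, bounded by $KK'\Vert\delta u_t\Vert$ using the $K$-Lipschitzness of $\nabla_u f$ in $u$ and $\Vert p^{\bphi,\eta}_t\Vert\le K'$, and then estimated through the bound on $\delta u$ just obtained; and (c) the pure control-mismatch term $\Vert\nabla_u H(t,u^{\btheta,\zeta}_t,p^{\btheta,\zeta}_t,\varphi_t)-\nabla_u H(t,u^{\btheta,\zeta}_t,p^{\btheta,\zeta}_t,\theta_t)\Vert$, left explicit. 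A backward Gr\"onwall inequality, after substituting the formula for $K_{\delta u}$ and collecting the exponential factors ($e^{KT}$ from this Gr\"onwall step times the $e^{KT}$ already inside $K_{\delta u}$, giving $e^{2KT}$) and the polynomial factors $1+K'T$ (from integrating the uniform co-state bound $K'$ over $[0,T]$), produces $K_{\delta p}$; the hypothesis $\eta\le1$ is used to absorb any leftover $\eta^2$ into $\eta$.

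The bulk is entirely standard; the one place that needs care is \eqref{eq:Kp}, where the already-derived bound on $\delta u$ must be fed into the backward Gr\"onwall estimate for $\delta p$ while keeping the constants aligned with the stated expression and avoiding double counting, and where one genuinely needs the \emph{a priori} uniform bound $\Vert p^{\bphi,\eta}_t\Vert\le K'$ on the \emph{perturbed} co-state (hence the appeal to $\eta\le1$ and to the control of $\Vert\Pi_\eta\Vert$) so that products of the form $\Vert\nabla_u f\Vert_2\,\Vert p\Vert$ can be bounded uniformly in time.
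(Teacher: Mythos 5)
Your proposal is correct and follows essentially the same route as the paper: a scalar Gr\"onwall bound for the co-state via $\Vert\nabla_u\Phi\Vert\le K$ and $\Vert\nabla_u f\Vert_2\le K$, then integral-form Gr\"onwall estimates for $\delta u$ and $\delta p$ obtained by splitting the projected dynamics into projector residuals (converted to $\sqrt T(\eta+\zeta)$ by Cauchy--Schwarz), Lipschitz state/co-state differences, and the explicit control-mismatch terms. The only cosmetic difference is that in (c) of your backward estimate you anchor the $\delta u$-proportional piece on $\Vert p^{\bphi,\eta}_t\Vert$, whereas the paper's splitting uses $\Vert p^{\btheta,\zeta}_t\Vert\le K'$ directly from \eqref{eq:Kprime}; both orderings yield the stated constant.
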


\begin{proof}[Proof of Proposition \ref{prop:gronwall-evol}]
Inequality \eqref{eq:Kprime} follows from the fact that
$$
\dot p^{\btheta, \zeta}_t
= \Pi_\zeta \left( - p^{\btheta, \zeta}_t \cdot \nabla_u f( u^{\btheta, \zeta}_t, \theta_t)  \right)
$$
thus taking the scalar product with  $p^{\btheta, \zeta}_t$ and using the Cauchy-Schwarz inequality, we derive the evolution
$$
\frac 1 2 \frac{\dd}{\dt} \Vert p^{\btheta, \zeta}_t \Vert^2 \leq \Vert \Pi_\zeta \Vert \Vert \nabla_u f( u^{\btheta, \zeta}_t, \theta_t)  \Vert_2 \Vert p^{\btheta, \zeta}_t \Vert^2,
\quad
\Vert p^{\btheta, \zeta}_T \Vert^2 = \Vert \nabla_u \Phi(u^{\btheta, \zeta}_T, y) \Vert^2
$$
and the result follows by applying the Grönwall's inequality and the fact that $\Vert \nabla_u \Phi(u^{\btheta, \zeta}_T, y) \Vert \leq K$ and $\Vert \nabla_u f( u^{\btheta, \zeta}_t, \theta_t) \Vert_2 \leq K$ by hypothesis (A1) and (A2).

To derive inequality \eqref{eq:Kz}, we start from
$$
\delta \dot u_t
= \dot u_t^{\bphi, \eta} - \dot u_t^{\btheta, \zeta}
= \Pi_\eta f( u^{\bphi, \eta}_t, \varphi_t) - \Pi_\zeta f( u^{\btheta, \zeta}_t, \theta_t).
$$
Integrating between $0$ and $t\leq T$,
$$
\delta u_t
=
\int_0^t \Pi_\eta f(s, u^{\bphi, \eta}_s, \varphi_s) - \Pi_\zeta f(s, u^{\btheta, \zeta}_s, \theta_s) \ds
$$
Taking norms,
\begin{align}
\Vert \delta u_t \Vert
&\leq 
\int_0^t \Vert f(s, u^{\bphi, \eta}_s, \varphi_s)  - \Pi_\eta f(s, u^{\bphi, \eta}_s, \varphi_s) \Vert \ds
+
\int_0^t \Vert f(s, u^{\btheta, \zeta}_s, \theta_s) - \Pi_\zeta f(s, u^{\btheta, \zeta}_s, \theta_s) \Vert \ds \\
&+
\int_0^t \Vert f(s, u^{\bphi, \eta}_s, \varphi_s) - f(s, u^{\btheta, \zeta}_s, \theta_s) \Vert \ds \\
&\leq
\sqrt{T} (\eta + \zeta)
+
\int_0^T \Vert f(s, u^{\bphi, \eta}_s, \varphi_s) - f(s, u^{\btheta, \zeta}_s, \varphi_s) \Vert \ds
+
\int_0^T \Vert f(s, u^{\btheta, \zeta}_s, \varphi_s) -f(s, u^{\btheta, \zeta}_s, \theta_s) \Vert \ds
\\
&\leq
\sqrt{T} (\eta + \zeta) + K \int_0^T \Vert \delta u_s \Vert \ds
+
\int_0^T \Vert f(s, u^{\btheta, \zeta}_s, \varphi_s) -f(s, u^{\btheta, \zeta}_s, \theta_s) \Vert \ds
\end{align}
Thus by Grönwall's inequality,
$$
\Vert \delta u_t \Vert 
\leq
K_{\delta u} \coloneqq
\left( \sqrt{T} (\eta + \zeta) + \int_0^T \Vert f(s, u^{\btheta, \zeta}_s, \varphi_s) -f(s, u^{\btheta, \zeta}_s, \theta_s) \Vert \ds \right)e^{KT}
$$
Proceeding in a similar manner with $\delta \dot p_t$, by integrating between $T$ and $t$, we derive
\begin{align}
\int_T^t \delta \dot p_s \ds
&=
\delta p_t - \delta p_T
= \int_T^t \Pi_\eta \nabla_u H(s, u^{\bphi, \eta}_s, p^{\bphi, \eta}_s, \varphi_s) -\Pi_\zeta \nabla_u H(s, u^{\btheta, \zeta}_s, p^{\btheta, \zeta}_s, \theta_s) \ds
\end{align}
and similarly as before, we infer that
\begin{align}
\Vert \delta p_t \Vert
&\leq
 \sqrt{T}(\eta+\zeta) + K \Vert \delta u_T \Vert + K K' \int_0^T \Vert \delta u_s \Vert + K \int_0^T \Vert \delta p_s \Vert \ds
 \ds \\
&+ \int_0^T \Vert  \nabla_u H(s, u^{\btheta, \zeta}_s, p^{\btheta, \zeta}_s, \varphi_s) -  \nabla_u H(s, u^{\btheta, \zeta}_s, p^{\btheta, \zeta}_s, \theta_s)\Vert \ds
\end{align}
Using bounds \eqref{eq:Kz} and \eqref{eq:Kp}, and applying the Gronwall inequality, we derive the announced estimate
\begin{align}
\Vert \delta p_t \Vert  \leq K_{\delta p}
\end{align}
with
\begin{align}
K_{\delta p}&= e^{KT}
\left(
 \sqrt{T}(\eta+\zeta) + K K_{\delta u}(1 + K' T) + \int_0^T \Vert  \nabla_u H(s, u^{\btheta, \zeta}_s, p^{\btheta, \zeta}_s, \varphi_s) -  \nabla_u H(s, u^{\btheta, \zeta}_s, p^{\btheta, \zeta}_s, \theta_s)\Vert \ds
\right) \\
&= \sqrt{T}e^{2KT} (1+K(1+K'T)) (\eta+\zeta) + e^{2KT}K(1+K'T)  \int_0^T \Vert f(s, u^{\btheta, \zeta}_s, \varphi_s) -f(s, u^{\btheta, \zeta}_s, \theta_s) \Vert \ds \\
&+ e^{KT}  \int_0^T \Vert  \nabla_u H(s, u^{\btheta, \zeta}_s, p^{\btheta, \zeta}_s, \varphi_s) -  \nabla_u H(s, u^{\btheta, \zeta}_s, p^{\btheta, \zeta}_s, \theta_s)\Vert \ds
\end{align}
\end{proof}

We can now prove Lemma \ref{lem:diffLoss}.

\begin{proof}[Proof of Lemma \ref{lem:diffLoss}]
By definition \eqref{eq:H} of the Hamiltonian, for any $\btheta \in \st$,
$$
I(\bu^\btheta, \bp^\btheta, \btheta) \coloneqq \int_0^T \left( p_t^\btheta \cdot f( u_t^\btheta, \theta_t) - H(t, u_t^\btheta, p_t^\btheta, \theta_t) - L(\theta_t) \right)\dt = 0.
$$
Thus
\begin{align}
\label{eq:diff-I}
0
&= I(\bu^{\bphi, \eta}, \bp^{\bphi, \eta}, \bphi) - I(\bu^{\btheta, \zeta}, \bp^{\btheta, \zeta}, \btheta) \\
&=
\underbrace{\int_0^T p_t^{\bphi, \eta} \cdot f( u_t^{\bphi, \eta}, \varphi_t) - p_t^{\btheta, \zeta} \cdot f( u_t^{\btheta, \zeta}, \theta_t) \dt}_{\coloneqq \cI_1}
-
\underbrace{\int_0^T H(t, u_t^{\bphi, \eta}, p_t^{\bphi, \eta}, \varphi_t)-H(t, u_t^{\btheta, \zeta}, p_t^{\btheta, \zeta}, \theta_t) \dt}_{\coloneqq \cI_2} \\
&-
\underbrace{\int_0^T R(\varphi_t) - R(\theta_t) \dt}_{\coloneqq \cI_3}
\end{align}
Denoting
$$
e^{f, \bphi, \eta}_t \coloneqq f( u_t^{\bphi, \eta}, \varphi_t) - \Pi_\eta f( u_t^{\bphi, \eta}, \varphi_t)
$$
and similarly for $e^{f, \btheta, \zeta}_t$, we have
\begin{align}
\cI_1 =
\underbrace{\int_0^T p_t^{\bphi, \eta} \cdot \Pi_\eta f( u_t^{\bphi, \eta}, \varphi_t) - p_t^{\btheta, \zeta} \cdot \Pi_\zeta f( u_t^{\btheta, \zeta}, \theta_t) \dt}_{\coloneqq \cI_{1,1}}
+
\underbrace{\int_0^T p_t^{\bphi, \eta} \cdot e^{f, \bphi, \eta}_t - p_t^{\btheta, \zeta} \cdot e^{f, \btheta, \zeta}_t \dt}_{\coloneqq \cI_{1,2}}
\end{align}
In addition, since $\dot u_t^{\bphi, \eta} =\Pi_\eta f( u_t^{\bphi, \eta}, \varphi_t)$ and similarly for $\btheta$, it follows that
\begin{align}
\cI_{1,1} = \int_0^T p_t^{\bphi, \eta} \cdot \dot u_t^{\bphi, \eta} - p_t^{\btheta, \zeta} \cdot  \dot u_t^{\btheta, \zeta} \dt
=
\underbrace{\int_0^T p_t^{\btheta, \zeta} \cdot \delta \dot u_t + \delta p_t \cdot \dot u_t^{\btheta, \zeta} \dt}_{\coloneqq \cI_{1,1,1}}
+
\underbrace{\int_0^T \delta p_t \cdot \delta \dot u_t \dt}_{\coloneqq \cI_{1,1,2}}
\end{align}
where
$$
\delta u_t \coloneqq u_t^{\bphi,\eta} - u_t^{\btheta,\zeta},
\quad \text{and} \quad
\delta p_t \coloneqq p_t^{\bphi,\eta} - p_t^{\btheta,\zeta}.
$$
By integration by parts, and the fact that $\dot p_t^{\btheta, \zeta} = - \Pi_\zeta \nabla_u H(t, u_t^{\btheta, \zeta}, p_t^{\btheta, \zeta}, \theta_t)$, we have
\begin{align}
\cI_{1,1,1}
&= p_t^{\btheta, \zeta} \cdot \delta u_t \, \Big\vert_0^T
+
\int_0^T \delta p_t \cdot \dot u_t^{\btheta, \zeta} - \dot p_t^{\btheta, \zeta}\cdot \delta u_t \dt \\
&= p_t^{\btheta, \zeta} \cdot \delta u_t \, \Big\vert_0^T
+ \int_0^T 
\delta u_t \cdot \Pi_\zeta \nabla_u H(t, u_t^{\btheta, \zeta}, p_t^{\btheta, \zeta}, \theta_t)
+ \delta p_t \cdot \Pi_\zeta \nabla_p H(t, u_t^{\btheta, \zeta}, p_t^{\btheta, \zeta}, \theta_t) \dt  \\
&= p_t^{\btheta, \zeta} \cdot \delta u_t \, \Big\vert_0^T + \int_0^T \Pi_\zeta \nabla_{w} H(t, w_t^{\btheta, \zeta}, \theta_t) \cdot \delta w_t\dt,
\end{align}
where we have used the shorthand notation $w = (u, p)$ in the last line. This notation will also be used in what follows.

We next address the intergral $\cI_{1,1,2}$. By integration by parts,
\begin{align}
\cI_{1,1,2}
&= \frac 1 2 \int_0^T \delta p_t \cdot \delta \dot u_t\dt + \frac 1 2 \int_0^T \delta p_t \cdot \delta \dot u_t\dt \\
&= \frac 1 2 \delta p_t \cdot \delta u_t \Big\vert_0^T \
- \frac 1 2 \int_0^T \delta \dot p_t \cdot \delta u_t\dt
+ \frac 1 2 \int_0^T \delta p_t \cdot \delta \dot u_t\dt \\
&= \frac 1 2 \delta p_t \cdot \delta u_t \Big\vert_0^T
+ \frac 1 2 \int_0^T \left( \Pi_\eta \nabla_{w} H(t, w_t^{\bphi, \eta}, \varphi_t) - \Pi_\zeta \nabla_{w} H(t, w_t^{\btheta, \zeta}, \theta_t) \right)
\cdot \delta w_t \dt
\end{align}
Adding and subtracting $\nabla_{w} H(t, w_t^{\bphi, \eta}, \varphi_t)$ and $\nabla_{w} H(t, w_t^{\btheta, \zeta}, \theta_t)$ to the last formula,
\begin{align}
\cI_{1,1,2}
&= \frac 1 2 \delta p_t \cdot \delta u_t \Big\vert_0^T
+ \frac 1 2 \int_0^T \left( 
\Pi_\eta \nabla_{w} H(t, w_t^{\bphi, \eta}, \varphi_t) - \nabla_{w} H(t, w_t^{\bphi, \eta}, \varphi_t)
\right) \cdot \delta w_t \dt \\
&+
\frac 1 2 \int_0^T
\left(
\nabla_{w} H(t, w_t^{\btheta, \zeta}, \theta_t) 
-
\Pi_\zeta \nabla_{w} H(t, w_t^{\btheta, \zeta}, \theta_t) 
\right)\cdot \delta w_t \dt  \\
&+ \frac 1 2 \int_0^T
\left(
\nabla_{w} H(t, w_t^{\bphi, \eta}, \varphi_t)
-
\nabla_{w} H(t, w_t^{\btheta, \zeta}, \theta_t) 
\right) \cdot \delta w_t \dt
\end{align}
By applying Taylor's theorem around $w=w^{\btheta, \zeta}= (u_t^{\btheta, \zeta}, p_t^{\btheta, \zeta})$ to the function $w \to \nabla_{w}H(t, w, \varphi_t)$, there exists $r_1(t)\in [0,1]$ such that
$$
\nabla_{w} H(t, w_t^{\bphi, \eta}, \varphi_t)
=
\nabla_{w} H(t, w_t^{\btheta, \zeta}, \varphi_t)
+ \delta w_t \cdot
 \nabla_{w}^2 H(t, w_t^{\btheta, \zeta}+r_1(t) \delta w_t, \varphi_t)
$$
Thus
\begin{align}
\cI_{1,1,2}
&= \frac 1 2 \delta p_t \cdot \delta u_t \Big\vert_0^T
+ \frac 1 2 \int_0^T \left( 
\Pi_\eta \nabla_{w} H(t, w_t^{\bphi, \eta}, \varphi_t) - \nabla_{w} H(t, w_t^{\bphi, \eta}, \varphi_t)
\right) \cdot \delta w_t \dt \\
&+
\frac 1 2 \int_0^T
\left(
\nabla_{w} H(t, w_t^{\btheta, \zeta}, \theta_t) 
-
\Pi_\zeta \nabla_{w} H(t, w_t^{\btheta, \zeta}, \theta_t) 
\right)\cdot \delta w_t \dt  \\
&+ \frac 1 2 \int_0^T
\left(
\nabla_{w} H(t, w_t^{\btheta, \zeta}, \varphi_t)
-
\nabla_{w} H(t, w_t^{\btheta, \zeta}, \theta_t) 
\right) \cdot \delta w_t \dt \\
&+ \frac 1 2 \int_0^T \delta w_t \cdot
 \nabla_{w}^2 H(t, w_t^{\btheta, \zeta}+r_1(t) \delta w_t, \varphi_t)
 \cdot \delta w_t \dt
\end{align}
Since $\delta u_0=0$, the boundary terms from $\cI_{1,1,1}$ and $\cI_{1,1,2}$ are
\begin{align}
 \left( p_t^{\btheta, \zeta} + \frac 1 2  \delta p_t \right) \cdot\delta u_t \Big\vert_0^T
& =  \left( p_T^{\btheta, \zeta} + \frac 1 2 \delta p_T \right) \cdot\delta u_T \\
&= -\nabla \Phi(u_T^{\btheta, \zeta})\cdot\delta u_T - \frac 1 2 \left( \nabla \Phi(u_T^{\bphi, \eta}) - \nabla \Phi(u_T^{\btheta, \zeta})  \right) \cdot \delta u_T
\end{align}
By applying Taylor's theorem around $z=u_t^{\btheta, \zeta}$ to the function $z\to \nabla \Phi(z)$, there exists $r_2\in [0,1]$ such that
$$
\nabla \Phi(u_T^{\bphi, \eta}) = \nabla \Phi(u_T^{\btheta, \zeta}) + \delta u_t \cdot \nabla^2 \Phi( u_T^{\btheta, \zeta} + r_2 \delta u_T) .
$$
Therefore
$$
 \left( p_t^{\btheta, \zeta} +  \frac 1 2  \delta p_t \right) \cdot\delta u_t \Big\vert_0^T
=
-\nabla \Phi(u_T^{\btheta, \zeta})\cdot\delta u_T - \frac 1 2 \delta u_T \cdot \nabla^2 \Phi( u_T^{\btheta, \zeta} + r_2 \delta u_T) \cdot \delta u_T
$$
Since, again by Taylor's theorem, there exists $r_3\in [0,1]$ such that
$$
\Phi(p_T^{\bphi,\eta})
=
\Phi(p_T^{\btheta,\zeta})  + \delta u_t \cdot \nabla \Phi (u_T^{\btheta, \zeta}) + \frac 1 2 \delta u_t \cdot \nabla^2 \Phi (u_T^{\btheta, \zeta} + r_3 \delta u_t) \cdot \delta u_T,
$$
we finally get the expression for the the boundary terms from $\cI_{1,1,1}$ and $\cI_{1,1,2}$
$$
\left( p_t^{\btheta, \zeta} + \frac 1 2   \delta p_t \right) \cdot\delta u_t \Big\vert_0^T
=
\Phi(u_T^{\btheta, \zeta}) - \Phi(u_T^{\bphi, \eta}) - \frac 1 2 \delta u_T \cdot
\left( 
\nabla^2\Phi(u_T^{\btheta, \zeta} + r_2 \delta u_T) - \nabla^2\Phi(u_T^{\btheta, \zeta} + r_3 \delta u_T^\eta) 
\right) \cdot \delta u_T
$$
We now turn to $\cI_2$. By Taylor's theorem, there exists $r_4(t) \in [0,1]$ such that
\begin{align}
\cI_2
&= \int_0^T H(t, w_t^{\bphi, \eta}, \varphi_t)-H(t, w_t^{\btheta, \zeta}, \theta_t) \dt \\
&= \int_0^T \Delta H_{\bphi, \btheta}^{x, \zeta}(t) \dt
+ \int_0^T \nabla_w H(t, w_t^{\btheta, \zeta}, \varphi_t) \cdot \delta w_t \dt \\
&\quad + \frac 1 2 \int_0^T \delta w_t \cdot \nabla_w^2 H(t, w_t^{\btheta, \zeta} + r_4(t) \delta w_t, \varphi_t) \cdot \delta w_t \dt,
\end{align}
where we remind that, as defined in equation \eqref{eq:deltaH},
$
\Delta H_{\bphi, \btheta}^{x, \zeta}(t) \coloneqq H(t, w_t^{\btheta, \zeta}, \varphi_t)-H(t, w_t^{\btheta, \zeta}, \theta_t).
$
Injecting the above expressions for the terms of $\cI_1$ and $\cI_2$ into equation \eqref{eq:diff-I}, passing the terms corresponding to the loss function to the left hand side, and adding and subtracting $\int_0^T \Pi_\zeta\nabla_{w} H(t, w_t^{\btheta, \zeta}, \varphi_t)\cdot \delta w_t\dt$,
\begingroup
\allowdisplaybreaks
%
\begin{align}
&\loss(x, y, \bphi) -  \loss(x, y, \btheta) \tag*{(Diff-Loss)} \\
&=\Phi(u_T^{\bphi, \eta}) - \Phi(u_T^{\btheta, \zeta})  + \int_0^T \left( R(\varphi_t) - R(\theta_t) \right) \dt \\
&=  \int_0^T \left( \Pi_\zeta \nabla_{w} H(t, w_t^{\btheta, \zeta}, \theta_t)- \Pi_\zeta\nabla_{w} H(t, w_t^{\btheta, \zeta}, \varphi_t)\right) \cdot \delta w_t\dt \tag*{(T1)}\\
&+ \int_0^T p_t^{\bphi, \eta} \cdot e^{f, \bphi, \eta}_t - p_t^{\btheta, \zeta} \cdot e^{f, \btheta, \zeta}_t \dt \tag*{(T2)} \\
&+ \frac 1 2 \int_0^T \left( 
\Pi_\eta \nabla_{w} H(t, w_t^{\bphi, \eta}, \varphi_t) - \nabla_{w} H(t, w_t^{\bphi, \eta}, \varphi_t)
\right) \cdot \delta w_t \dt \tag*{(T3)}\\
&+
\frac 1 2 \int_0^T
\left(
\nabla_{w} H(t, w_t^{\btheta, \zeta}, \theta_t) 
-
\Pi_\zeta \nabla_{w} H(t, w_t^{\btheta, \zeta}, \theta_t) 
\right)\cdot \delta w_t \dt  \tag*{(T4)}\\
&+ \frac 1 2 \int_0^T
\left(
\nabla_{w} H(t, w_t^{\btheta, \zeta}, \varphi_t)
-
\nabla_{w} H(t, w_t^{\btheta, \zeta}, \theta_t) 
\right) \cdot \delta w_t \dt \tag*{(T5)}\\
&+ \frac 1 2 \int_0^T \delta w_t \cdot
 \nabla_{w}^2 H(t, w_t^{\btheta, \zeta}+r_1(t) \delta w_t, \varphi_t)
 \cdot \delta w_t \dt  \tag*{(T6)}\\
 & - \int_0^T \Delta H_{\bphi, \btheta}^{x, \zeta}(t) \dt  \tag*{(T7)}\\
 &+ \int_0^T  \left( \Pi_\zeta\nabla_{w} H(t, w_t^{\btheta, \zeta}, \varphi_t) - \nabla_w H(t, w_t^{\btheta, \zeta}, \varphi_t) \right) \cdot \delta w_t \dt \tag*{(T8)}\\
& - \frac 1 2 \int_0^T \delta w_t \cdot \nabla_w^2 H(t, w_t^{\btheta, \zeta} + r_4(t) \delta w_t, \varphi_t) \cdot \delta w_t \dt \tag*{(T9)}\\
&- \frac 1 2 \delta u_T \cdot
\left( \nabla^2\Phi(u_T^{\btheta, \zeta} + r_2 \delta u_T) - \nabla^2\Phi(u_T^{\btheta, \zeta} + r_3 \delta u_T^\eta) \right)  \tag{(T10)}
\end{align}
\endgroup
We next derive a bound for the difference $\loss(x, y, \bphi) -  \loss(x, y, \btheta)$. We proceed to bound all the terms by order of appearance in the above formula. We will sometimes use that by \eqref{eq:Kz}, \eqref{eq:Kp} and Jensen's inequality,
\begin{align}
\Vert \delta w_t  \Vert^2
=
\Vert \delta u_t  \Vert^2 + \Vert \delta p_t  \Vert^2
 \lesssim 
(\eta+\zeta)^2 + 
 \int_0^T \Vert  \nabla_w H(s, w^{\btheta, \zeta}_s, \varphi_s) -  \nabla_w H(s, w^{\btheta, \zeta}_s,  \theta_s)\Vert^2 \ds 
\end{align}
which yields
\begin{equation}
\Vert \delta w_t  \Vert_{L^2([0,T])} 
\lesssim
(\eta + \zeta) + \Vert \nabla_w (\Delta H_{\bphi, \btheta}^{x, \zeta} )\Vert_{L^2([0,T], \bR^n\times \bR^n)}.
\label{eq:delta-w-bound}
\end{equation}

\paragraph*{Term (T1)+(T5):} By the Cauchy-Schwartz inequality and \eqref{eq:delta-w-bound}, we can bound term (T1) by
\begin{align}
&\int_0^T \left( \Pi_\zeta \nabla_{w} H(t, w_t^{\btheta, \zeta}, \theta_t)- \Pi_\zeta\nabla_{w} H(t, w_t^{\btheta, \zeta}, \varphi_t)\right) \cdot \delta w_t\dt  \\
&\leq
\Vert \Pi_\zeta \Vert
\Vert \nabla_{w} H(t, w_t^{\btheta, \zeta}, \theta_t)- \nabla_{w} H(t, w_t^{\btheta, \zeta}, \varphi_t) \Vert_{L^2([0,T])} \Vert \delta w_t \Vert_{L^2([0,T])} \\
&\lesssim
(\eta + \zeta) \Vert \nabla_w ( \Delta H_{\bphi, \btheta}^{x, \zeta} ) \Vert_{L^2([0,T])} 
+ \Vert \nabla_w (\Delta H_{\bphi, \btheta}^{x, \zeta} )\Vert_{L^2([0,T], \bR^n\times \bR^n)}^2
\end{align}
Thus
$$
(T1) + (T5) \lesssim (\eta + \zeta) + (\eta + \zeta) \Vert \nabla_w ( \Delta H_{\bphi, \btheta}^{x, \zeta} ) \Vert_{L^2([0,T])} 
+ \Vert \nabla_w (\Delta H_{\bphi, \btheta}^{x, \zeta} )\Vert_{L^2([0,T])}^2
$$

\paragraph*{Term (T2):} Using \eqref{eq:Kprime} and the fact that $\Vert e^{f, \btheta, \zeta}_t  \Vert_{L^2([0,T], \bR^n)} \leq \zeta$
\begin{align}
\int_0^T p_t^{\bphi, \eta} \cdot e^{f, \bphi, \eta}_t - p_t^{\btheta, \zeta} \cdot e^{f, \btheta, \zeta}_t \dt
\lesssim \eta+\zeta
\end{align}

\paragraph*{Terms (T3) and (T8):} By the Cauchy-Schwarz inequality, we can bound term (T3)
\begin{align}
\vert
&\int_0^T \left( 
\Pi_\eta \nabla_{w} H(t, w_t^{\bphi, \eta}, \varphi_t) - \nabla_{w} H(t, w_t^{\bphi, \eta}, \varphi_t)
\right) \cdot \delta w_t \dt \vert  \\
&\leq \Vert \Pi_\eta \nabla_{w} H(t, w_t^{\bphi, \eta}, \varphi_t) - \nabla_{w} H(t, w_t^{\bphi, \eta}, \varphi_t) \Vert_{L^2([0,T], \bR^n\times \bR^n)}
\Vert \delta w_t  \Vert_{L^2([0,T], \bR^n\times \bR^n)}  \\
&\lesssim \eta \Vert \nabla_w (\Delta H_{\bphi, \btheta}^{x, \zeta} )\Vert_{L^2([0,T], \bR^n\times \bR^n)}
\end{align}
where we have used that the propagator $\Pi_\eta$ guarantees
$$
\Vert \Pi_\eta \nabla_{w} H(t, w_t^{\bphi, \eta}, \varphi_t) - \nabla_{w} H(t, w_t^{\bphi, \eta}, \varphi_t) \Vert_{L^2([0,T], \bR^n\times \bR^n)},
\lesssim \eta
$$
and finally
\begin{align}
&\vert
\int_0^T \left(
\Pi_\eta \nabla_{w} H(t, w_t^{\bphi, \eta}, \varphi_t) - \nabla_{w} H(t, w_t^{\bphi, \eta}, \varphi_t)
\right) \cdot \delta w_t \dt \vert \lesssim
\eta(\eta + \zeta) + \eta \Vert \nabla_w (\Delta H_{\bphi, \btheta}^{x, \zeta} )\Vert_{L^2([0,T], \bR^n\times \bR^n)}
\end{align}

\paragraph*{Term (T4):}
Using \eqref{eq:Kprime} and the fact that $\Vert e^{f, \btheta, \zeta}_t  \Vert_{L^2([0,T], \bR^n)} \leq \zeta$, we have $\int_0^T p_t^{\bphi, \eta} \cdot e^{f, \bphi, \eta}_t - p_t^{\btheta, \zeta} \cdot e^{f, \btheta, \zeta}_t \dt \lesssim \eta+\zeta$.

\paragraph*{Term (T6), (T9) and (T10):} By assumptions (A1), (A2), all second derivative terms are bounded element-wise by some constant $K$. Hence, we have $| \delta w_t \cdot A \cdot \delta w_t  | \lesssim \Vert \delta w_t  \Vert^2$ for $A$ being a second derivative matrix.

\paragraph*{Summary:} By using the above inequalities, in formula (Diff-Loss), we deduce that there exists a constant $C>0$ depending on $T$, $\Vert \Pi_\zeta\Vert$ and $\Vert \Pi_\eta\Vert$ such that, if $\eta\leq 1$,
\begin{align}
&\loss(x, y, \bphi) -  \loss(x, y, \btheta)  \leq - \int_0^T \Delta H_{\bphi, \btheta}^{x, \zeta}(t)\dt  + C \left( (\eta + \zeta)^2  + \Vert \nabla_w (\Delta H_{\bphi, \btheta}^{x, \zeta} )\Vert_{L^2([0,T])}^2 \right)
\end{align}
Note that the term $(\eta + \zeta)^2$ accounts for the fact that we are not solving exactly the dynamics.
\end{proof}

\end{document}